\Crefname{assumption}{Assumption}{Assumptions}
\numberwithin{equation}{section}
\numberwithin{figure}{section}
\theoremstyle{definition}
\newtheorem{theorem}{Theorem}[section]
\theoremstyle{definition}
\newtheorem{proposition}[theorem]{Proposition}
\theoremstyle{definition}
\newtheorem{assumption}[theorem]{Assumption}
\theoremstyle{definition}
\newtheorem{remark}[theorem]{Remark}
\theoremstyle{definition}
\newtheorem{lemma}[theorem]{Lemma}
\theoremstyle{definition}
\theoremstyle{definition}
\newlist{casenv}{enumerate}{4}
\setlist[casenv]{leftmargin=*,align=left,widest={iiii}}
\setlist[casenv,1]{label={{\itshape\ \casename} \arabic*.},ref=\arabic*}
\setlist[casenv,2]{label={{\itshape\ \casename} \roman*.},ref=\roman*}
\setlist[casenv,3]{label={{\itshape\ \casename\ \alph*.}},ref=\alph*}
\setlist[casenv,4]{label={{\itshape\ \casename} \arabic*.},ref=\arabic*}
\providecommand{\casename}{Case}
\newtheorem*{remark*}{Remark}
\newtheorem*{example*}{Example}
\renewenvironment{proof}[1][\proofname]{\medskip \noindent {\bfseries #1. }}{\hfill \qedsymbol \medskip} 
\DeclareRobustCommand{\SkipTocEntry}[5]{}
\newcommand{\mR}{\mathbb{R}}   
\newcommand{\mZ}{\mathbb{Z}}   
\newcommand{\mN}{\mathbb{N}}   
\newcommand{\mP}{\mathbb{P}}   
\newcommand{\eps}{\varepsilon}
\newcommand{\e}{\varepsilon}
\newcommand{\rmd}{\mathrm{d}} 
\newcommand{\mL}{\mathcal{L}} 
\newcommand{\mF}{\mathscr{F}} 
\newcommand{\mO}{\mathcal{O}} 
\newcommand{\mD}{\mathcal{D}} 
\newcommand{\one}{\mathbbm{1}}
\newcommand{\bbE}{\mathbb{E}}
\newcommand{\calG}{\mathcal{G}}
\newcommand{\Om}{\Omega}
\newcommand{\abs}[1]{\lvert #1 \rvert}  
\newcommand{\norm}[1]{\lVert #1 \rVert}  
\newcommand{\KL}{\mathrm{KL}}
\DeclareMathOperator{\supp}{supp}
\DeclareMathOperator{\ssupp}{sing\ supp}
\DeclareMathOperator{\dist}{dist}
\DeclareRobustCommand{\SkipTocEntry}[5]{}
\newcommand{\puzhao}[1]{\begin{quotation}[\textbf{\color{teal}Pu-Zhao's comment:\
		}{\color{teal}\textit{#1}}]\end{quotation}}
\newcommand{\janne}[1]{\begin{quotation}[\textbf{\color{red}Janne's comment:\
 		}{\color{red}\textit{#1}}]\end{quotation}}
\newcommand{\R}{{\mathbb R}}
\begin{document}
\title[Bayesian inference for the fractional Calder{\'o}n problem]{Bayesian inference for the fractional Calder{\'o}n problem with a single measurement}

\author[P.-Z. Kow]{Pu-Zhao Kow\,\orcidlink{0000-0002-2990-3591}}
\address{Department of Mathematical Sciences, National Chengchi University, Taipei 116, Taiwan}
\email{pzkow@g.nccu.edu.tw}

\author[J. Nurminen]{Janne Nurminen\,\orcidlink{0000-0002-9018-803X}}
\address{Computational Engineering, School of Engineering Sciences, Lappeenranta-Lahti University of Technology, Finland \& Department of Mathematics and Statistics, University of Jyv\"askyl\"a, Finland}
\email{janne.nurminen@lut.fi; janne.s.nurminen@jyu.fi} 

\author[J. Railo]{Jesse Railo\,\orcidlink{0000-0001-9226-4190}}
\address{Computational Engineering, School of Engineering Sciences, Lappeenranta-Lahti University of Technology, Finland} 
\email{jesse.railo@lut.fi} 

\begin{abstract}
\begin{sloppypar}
This paper investigates the consistency of a posterior distribution in the single-measurement fractional Calder{\'o}n problem with additive Gaussian noise. We consider a Bayesian framework with rescaled and Gaussian sieve priors, using a collection of noisy, discrete observations taken from a suitable exterior domain. Our main result shows that the posterior distribution concentrates around the true parameter as the number of measurements increases. Furthermore, we establish tight convergence rates for the reconstruction error of the posterior mean. 
A central technical challenge is to obtain refined stability estimates for both the forward and inverse problems. In particular, the required forward estimates are delicate to obtain because the fractional elliptic problems do not enjoy as strong regularity theory as their classical counterparts.
\end{sloppypar}
\end{abstract}

\subjclass[2020]{35R11, 35R30, 62G20}
\keywords{Inverse problems, fractional Laplacian, Bayesian inference, Gaussian prior, frequentist consistency}

\maketitle

\begin{sloppypar}

\tableofcontents


\subsection*{Acknowledgments} 
\addtocontents{toc}{\SkipTocEntry}

Kow was partially supported by the National Science and Technology Council of Taiwan, NSTC 112-2115-M-004-004-MY3. Nurminen and Railo were supported by the Research Council of Finland through the Flagship of Advanced Mathematics for Sensing, Imaging and Modelling (grant numbers 359183 and 359208) and by the Emil Aaltonen Foundation. In addition, Railo was supported by the Fulbright Finland Foundation (ASLA-Fulbright Research Grant for Junior Scholars 2024-2025) and the Jenny and Antti Wihuri Foundation.

\subsection*{Notations}
\addtocontents{toc}{\SkipTocEntry}

Throughout this paper, we shall use the symbol $\lesssim$ and $\gtrsim$ for inequalities holding up to a universal constant. For two real sequences $(a_{N})$ and $b_{N}$, we say that $\simeq$ if both $a_{N} \lesssim b_{N}$ and $b_{N} \lesssim a_{N}$ for all sufficiently large $N$. For a sequence of random variables $Z_N$ and a real sequence $(a_{N})$, we write $Z_N=O_{\Pr}(a_N)$ if for all $\eps>0$ there exists $M_\eps<\infty$ such that for all $N$ large enough, $\Pr(|Z_N|\ge M_\eps a_N)<\eps$. Denote ${\mathcal L}(Z)$ the law of a random variable $Z$. We also denote $a\vee b=\max\{a,b\}$ for all $a,b\in\mR$. 

Let $d\ge 1$ be an integer. For each $\gamma\in\mR$, let $H^{\gamma}(\mR^{d})$ be the standard $L^{2}$-based fractional Sobolev space. For each open set $U\subset\mR^{d}$, we denote $\tilde{H}^{\gamma}(U)$ be the closure of $C_{c}^{\infty}(U)$ (i.e. the collection of smooth function supported in $\Omega$) and denote $H^{\gamma}(U)$ be the restriction of $H^{\gamma}(\mR^{d})$ in $U$ equipped with the standard quotient norm. The following identifications is well-known for any bounded Lipschitz domain $U$: 
\begin{equation*}
(\tilde{H}^{\gamma}(U))' = H^{-\gamma}(U) ,\quad (H^{\gamma}(U))' = \tilde{H}^{-\gamma}(U) \quad \text{for all $\gamma\in\mR$,}  
\end{equation*}
see e.g. \cite[Section~2A]{GSU20Calderon}, \cite[Chapter~3]{McL00EllipticSystems} or \cite{KLW22CalderonFractionalWave} and the references therein for more details.

\section{Introduction}

Let $\Omega$ be a nonempty bounded smooth domain in $\mathbb{R}^d$, and let $(-\Delta)^s$ denote the fractional Laplacian of order $s \in (0,1)$, which admits at least ten equivalent definitions under suitable interpretations \cite{Kwa17FractionalEquivalent}. 
We fix any $0 \not\equiv \phi \in C_{c}^{\infty}(\Omega_{e})$ with $\Omega_{e}:=\mR^{d}\setminus\overline{\Omega}$, and consider the following Dirichlet problem: 
\begin{equation}\label{eq_main_intro}
((-\Delta)^{s}+f)u=0 \text{ in $\Omega$} ,\quad u|_{\Omega_{e}}=\phi.
\end{equation}
In this work we study an inverse problem for the fractional Schr\"{o}dinger equation which concerns the recovery of the potential $f$ from the Dirichlet-to-Neumann map (DN map)
\begin{equation}\label{DN_map}
\Lambda_{f}\phi\equiv\left.(-\Delta)^{s}u_{f}\right|_{\mD}, 
\end{equation}
where $\mD\subset\Om_e$ is a fixed open set. It was proved in \cite{GSU20Calderon} that the potential can be uniquely determined from exterior measurements and, remarkably, a single-point measurement suffices \cite{GRSU20Reconstruction}.  

The nonlocal Dirichlet problem \eqref{eq_main_intro} has been extensively studied in, e.g., \cite{Gru15FractionalLaplacianDomains, Ros-Oton_survey}. Nonlocal problems of this type also arise naturally in various applications. For instance, nonlocal models describe animal movement \cite{viswanathan1996}, foraging patterns of marine predators \cite{Humphries2010_un}, the dynamics of competing populations sharing the same resource \cite{valdinoci_biology} and in quantum optics \cite{KS2022collectivespontaneous, KS2023kineticequations, OKSW2024nonlocalpartial}.  Moreover, the fractional Laplacian appears in the linear problem of unique continuation from partial data Radon transforms in integral geometry and X-ray tomography \cite{ilmavirta2020unique, CMR20unique, IKS25UCPMRT}. Nonlocal problems also play a central role in physics, fluid dynamics, finance, and image processing, see \cite{Ros-Oton_survey, bucur_valdinoci} for further references. For inverse problems related to \eqref{eq_main_intro} we mention \cite{CMRU2022higherorder, FGKU2025fractionalanisotropic, feizmohammadi2025fractional} and the monograph \cite{LL25FractionalCalderonBook}. Further nonlinear variants are studied in \cite{MRZ2023pbiharmonic, KMS23GlobalUniquenessSemilinear, KW23FractionalNonlinear}. 

We consider a statistical model in which we randomly choose finitely many uniformly distributed points from $\mD$. A priori \eqref{DN_map} is not pointwise well-defined but \Cref{lem:qualitative} shows it to be smooth when restricted to $\mD$ for a smooth exterior value $\phi$. Then the value of the DN map at these points, with added noise, creates our measurements. We model $f$ with rescaled Gaussian priors and Gaussian sieve priors. Then we study the concentration and contraction rates, also the optimality of these rates, of posterior distributions arising from these priors. 

This approach to studying the Calderón problem was iniated in \cite{abraham2020statistical} where the authors introduced a statistical inverse problem for the isotropic conductivity equation. Then the approach was generalized in \cite{giordano2020consistency} in the context of determining a conductivity from a source-to-solution map. The authors of \cite{giordano2020consistency} used priors that were inspired by previous consistent inversion of noisy non-abelian $X$-ray transforms \cite{MNP21ConsistencyInversion}. Further results for various inverse problems with similar techniques can be found in \cite{FKW24ProbabilityInverseScattering,GK20BVM,Kekkonen2022IP,KW25BayesSubdiffusion,NvdGW20JUQ} and the references therein.

Our main results show that an estimator, here we use the posterior mean, converges to the ground truth that creates the data. The convergence rate is logarithmic and is based on the stability of the inverse problem \cite{Rueland2021SingleMeasurement}. Furthermore we prove that this convergence rate is optimal in the statistical minimax sense and this is based on the instability result for the inverse problem \cite{RS18Instability} (see also \cite{Man01instability} for the local version and \cite{KRS21InstabilityMechanism} for a more general setting for instability).
To prove these results we follow the general framework developed in \cite{giordano2020consistency}. In order to use this framework we need to analyze the forward and inverse problem in more detail.

For the forward problem we need our forward map $f\mapsto\Lambda_f\phi$ to be uniformly bounded on its domain and a suitable $L^2$-based local Lipschitz forward estimate (see \Cref{sec_forward_estimates}). The uniform boundedness is proved via the singular integral definition of the fractional Laplacian. To prove the forward estimate we analyze the DN map using the following observations. Firstly we use that the DN map is smooth in the exterior domain when we assign a smooth exterior value and thus we can use the strong definition of the DN map (see \cite{GSU20Calderon}). We use this together with the weak definition of the DN map and the Alessandrini identity. The trick is that in the Alessandrini identity we can choose up to three different functions to appear in the identity. Choosing these in a clever way, using a stronger elliptic estimate from \cite{covi2022globalinversefractionalconductivity} and in the end combining this with an approximation argument using cutoffs gives the desired estimate. See \Cref{lem:forward-estimate} for further details.

For the inverse problem we need a stability estimate, in particular a stability estimate for a single measurement Calder\'{o}n problem (for full data this was proven in \cite{RS20Calderon}). A stability estimate is proved in \cite{Rueland2021SingleMeasurement} but we need to refine it in order to make visible the expressions for $\norm{f_{1}}_{C^{s}(\Omega)}$ and $\norm{f_{2}}_{C^{s}(\Omega)}$. Deriving this estimate requires modifying some of the arguments in \cite{Rueland2021SingleMeasurement, GRSU20Reconstruction} and using tools developed in these articles.
Once we have obtained the required estimates, we prove our main results in the spirit of \cite{giordano2020consistency}.

Rest of the paper is as follows. In \Cref{sec_main_results} we introduce more precisely the fractional Schr\"{o}dinger equation and the statistical model for our inverse problem and furthermore the main results of our work. \Cref{sec_estimates} is dedicated to estimates related to the forward and inverse problems. In \Cref{sec_proof_of_main} we prove the main results and finally in \Cref{sec:MCMC} we illustrate numerically the convergence of the Bayesian procedure.

\section{Main results}\label{sec_main_results}

Let $\Omega$ be a nonempty bounded smooth domain in $\mathbb{R}^d$, and let $(-\Delta)^s$ denote the fractional Laplacian of order $s \in (0,1)$. 
We fix any $0 \not\equiv \phi \in C_{c}^{\infty}(\Omega_{e})$ with $\Omega_{e}:=\mR^{d}\setminus\overline{\Omega}$, and consider the following Dirichlet problem: 
\begin{equation}
((-\Delta)^{s}+f)u=0 \text{ in $\Omega$} ,\quad u|_{\Omega_{e}}=\phi, \label{eq:Dirichlet-problem}
\end{equation}
where $\Omega_{e}:=\mR^{d}\setminus\overline{\Omega}$. We assume that $0 \le f\in L^{\infty}(\Omega)$, which ensures the existence and uniqueness of a solution $u=u_{f}\in H^{s}(\mR^{d})$ to \eqref{eq:Dirichlet-problem}, where $H^{s}(\mR^{d})$ is the $L^{2}$-based Hilbert space defined via Fourier transform \cite[Lemma~2.3]{GSU20Calderon}. The nonnegativity of $f$ plays a crucial rule in ensuring the $L^{\infty}$-bound 
\begin{equation}
\norm{u_{f}}_{L^{\infty}(\Omega)} \le \norm{\phi}_{L^{\infty}(\Omega_{e})}, \label{eq:L-infty-bound}
\end{equation}
which is a consequence of maximum principle \cite[Proposition~3.3]{LL19GlobalUniqueness}.

It is known that $\left.(-\Delta)^{s}u_{f}\right|_{\Omega_{e}}$ is well-defined in $H^{-s}(\Omega_{e})$. Accordingly, the Dirichlet-to-Neumann (DN) map $\Lambda_{f}: C_{c}^{\infty}(\Omega_{e}) \rightarrow H^{-s}(\Omega_{e})$ is defined by $\Lambda_{f}\phi:=\left.(-\Delta)^{s}u_{f}\right|_{\mD}$. Fix any open set $\mD \subset \Omega_{e}$. It was showed in \cite{GRSU20Reconstruction} that one can uniquely determine such potential $f\in C^{0}(\Omega)$ from the knowledge of 
\begin{equation}
G(f):=\Lambda_{f}\phi\equiv\left.(-\Delta)^{s}u_{f}\right|_{\mD}, \label{eq:forward-map-choice} 
\end{equation}
for any fixed $0\not\equiv \phi\in C_{c}^{\infty}(\Omega_{e})$, which is mainly based on the following antilocality property for fractional Laplacian \cite[Theorem~1.2]{GSU20Calderon}: 
\begin{equation}
\begin{array}{c}
\text{if $w\in H^{r}(\mR^{d})$ for some $r\in\mR$ and there exists $s>0$ such that} \\ 
\text{$w=(-\Delta)^{s}w=0$ in some open set in $\mR^{d}$, then $w=0$ in $\mR^{d}$.} 
\end{array} \label{eq:antilocality}
\end{equation}
Accordingly, the operator $G:C^{0}(\Omega)\rightarrow H^{-s}(\mD)$ is called the \emph{forward operator}. 
By carefully quantifying the arguments, a logarithmic stability is derived in \cite{Rueland2021SingleMeasurement} when $f\in C^{s}(\Omega)$. 
Later in \Cref{lem:qualitative} below, we will show that $(-\Delta)^{s}u_{f} \in C^{\infty}(\Omega_{e})$, thereby ensuring that the measurement $G(f)$ is pointwise well-defined. However, it is still impractical to obtain full measurement $G(f)$ on $\mD$, therefore we develop a probabilistic approach with a discrete set of measurements for this problem. 

\subsection{The statistical model}

We randomly choose $N\in\mN$ points from a uniform distribution on $\mD$, that is, for $N\in\mN$, we consider the random variables 
\begin{equation*}
\{X_{i}\}_{i=1}^{N} \overset{\rm iid}{\sim} \mu ,\quad \mu = \rmd x/\abs{\mD}, 
\end{equation*}
where $\rmd x$ is the usual Lebesgue measure and $\abs{\mD}$ is the Lebesgue measure of $\mD$. It is more realistic to measure 
\begin{equation}
G(f)(x_{i}) = \left((-\Delta)^{s}u_{f}\right)(x_{i}), \label{eq:forward-map}
\end{equation}
where $x_{i}$ are sample points randomly chosen according to the random variable $X_{i}$. We assume the measurement model with a fixed noise level $\sigma>0$ as follows: 
\begin{equation}
Y_{i} = G(f)(X_{i}) + \sigma W_{i} ,\quad \{W_{i}\}_{i=1}^{N} \overset{\rm iid}{\sim} \mathcal{N}(0,1), \label{eq:measurement-model}
\end{equation}
where $\mathcal{N}(0,1)$ is the normal distribution with mean 0 and variance 1. We also assume that $W^{(N)}:=\{W_{i}\}_{i=1}^{N}$ and $X^{(N)}:=\{X_{i}\}_{i=1}^{N}$ are independent. We are interested in the inference of $f$ from the observational data $(Y^{(N)},X^{(N)})$ with $Y^{(N)}:=\{Y_{i}\}_{i=1}^{N}$. 

Let $\mO$ be a bounded smooth domain with $\overline{\mO}\subset\Omega$ and let $M_{0} > 1$ be a constant and we consider the following space of parameters: 
\begin{equation*}
\mF_{M_{0}}^{\beta}(\mO) := \left\{ f\in H^{\beta}(\overline{\Omega}): 0 < f < M_{0}, f|_{\Omega\setminus\mO} = 1 , \left. \partial_{\nu}^{j}f \right|_{\partial\mO}=0\, \text{ for all $1\le j\le \beta-1$} \right\}
\end{equation*}
where $\partial_{\nu}$ is the normal derivative in the sense of \cite[Theorem~9.4, Chapter~1]{LM72NonhomogeneousBVPVOL1}. 
Following the approach in \cite{abraham2020statistical,giordano2020consistency}, we consider the link function: 

\begin{assumption}[{\cite[Condition~1]{giordano2020consistency}}]\label{assu:link}
Let $\Phi$ satisfy 
\begin{enumerate}
\renewcommand{\labelenumi}{\theenumi}
\renewcommand{\theenumi}{(\roman{enumi})}
\item\label{itm:link-i} $\Phi:(-\infty,\infty)\rightarrow (0,M_{0})$, $\Phi(0)=1$, $\Phi'(z)>0$ for all $z$;
\item\label{itm:link-ii} for any $k\in{\mathbb N}$
\[
\sup_{-\infty<z<\infty}|\Phi^{(k)}(z)|<\infty.
\]
\end{enumerate}
\end{assumption}

Given any link function $\Phi$ as in \Cref{assu:link}, by following from the same argument in \cite{NvdGW20JUQ}, the parameter space can be realized as
\begin{equation*}
\mF_{M_{0}}^{\beta}(\mO) = \{\Phi\circ F: F\in H_{0}^{\beta}(\mO) \}. 
\end{equation*}
Accordingly, we can define the \emph{reparametrized forward map} by 
\begin{subequations}\label{eq:statistical-model}
\begin{equation}
\calG(F):=G(\Phi\circ F) \quad \text{for all $F\in H_{0}^{\beta}(\mO)$,}  \label{eq:reparametrized-forward-map}
\end{equation}
where $G$ is the mapping given in \eqref{eq:forward-map-choice}. Therefore, the model \eqref{eq:measurement-model} can be regarded as a special case of 
\begin{equation}
Y_{i}=\calG(F)(X_{i}) + \sigma W_{i} \quad \text{for $i=1,\cdots,N$.} \label{eq:statistical-model-main}
\end{equation}
\end{subequations} 

The random vectors $X_{i}$ on $\mD$ are then iid with laws denoted by $\mP_{F}^{i}$ with Radon-Nikodym density 
\begin{equation*}
p_{F}(y,x) := \frac{\rmd \mP_{F}^{i}}{\rmd y \times \rmd \mu}(y,x) = \frac{1}{\sqrt{2\pi\sigma^{2}}} \exp \left(-\frac{(y-\calG(F)(x))^{2}}{2\sigma^{2}}\right) \quad \text{for all $y\in\mR,x\in\mD$} 
\end{equation*}
where $\rmd y$ denotes the Lebesgue measure on $\mR$. By slightly abusing the notation, we write $\mP_{F}^{N}=\otimes_{i=1}^{N}\mP_{F}^{i}$ for the joint law of $(Y_{i},X_{i})_{i=1}^{N}$ on $\mR^{d}\times\mD^{N}$, and $\bbE_{F}^{i},\bbE_{F}^{N}$ are the corresponding expectation operators of $\mP_{F}^{i},\mP_{F}^{N}$ respectively. We model the parameter $F\in H_{0}^{s}(\mO)$ by a Borel probability measure $\Pi$ supported on the Banach space $C(\mO)$. Since the map $(F,(y,x))\mapsto p_{F}(y,x)$ can be shown to be jointly measurable the posterior distribution $\Pi(\cdot|Y^{(N)},X^{(N)})$ of $F|Y^{(N)},X^{(N)}$ arising from the model \eqref{eq:statistical-model-main} equals to 
\begin{equation}
\Pi(B|Y^{(N)},X^{(N)}) = \frac{\int_{B}e^{\ell^{N}(F)}\,\rmd\Pi(F)}{\int_{C(\Omega)}e^{\ell^{N}(F)}\,\rmd\Pi(F)} \label{eq:posterior}
\end{equation}
for any Borel set $B\subset C(\Omega)$, where 
\begin{equation*}
\ell^{N}(F) = - \frac{1}{2\sigma^{2}} \sum_{i=1}^{N} (Y_{i}-\calG(F)(X_{i}))^{2}
\end{equation*}
is the joint log-likelihood function (up to a constant).

In this work, we are interested in the frequentist property of the posterior distribution \eqref{eq:posterior} in the sense that the observation data $(Y^{(N)},X^{(N)})$ are generated through the model \eqref{eq:statistical-model} of law $\mP$ with a ground truth $f_{0}$. The aim here is to show that the posterior distribution arising from Gaussian process priors and Gaussian sieve priors concentrates near $f_{0}$ and to derive a bound on the rate of contraction as in \cite{giordano2020consistency}. We will follow the terminologies in \cite{giordano2020consistency} as well as in \cite{gine2021mathematical}. One may also consult the monograph \cite{Nickl23StatisticInverseProblem} for further progress in Bayesian nonlinear statistical inverse problems. 

\subsection{Results with rescaled Gaussian priors}

We begin this subsubsection by introducing the following assumptions. 


\begin{assumption}[{\cite[Condition~3]{giordano2020consistency}}]\label{assu:RKHS}
Let $\alpha > \beta + d/2$, let $\beta > 1+d/2$ and let $\mathcal{H}$ be a Hilbert space continuously embedded into $H_{0}^{\alpha}(\mO)$. Let $\Pi'$ be a centered Gaussian Borel probability measure on the Banach space $C(\mO)$ that is supported on a separable linear subspace of $H^{\beta}(\mO)$ and assume that the reproducing kernel Hilbert space (RKHS) of $\Pi'$ equals to $\mathcal{H}$. 
\end{assumption}

An example of $\Pi'$ described in \Cref{assu:RKHS} is given in \cite[Example~2.5]{KW25BayesSubdiffusion}. To proceed, if $\Pi'$ is as in \Cref{assu:RKHS} and $F'\sim \Pi'$, we consider the rescaled prior 
\begin{equation}
\Pi_{N} = \mL(F_{N}) ,\quad F_{N} = \frac{1}{N^{d/(4\alpha+4+2d)}}F'. \label{eq:rescaled-prior}
\end{equation}
Then $\Pi_{N}$ again defines a centered Gaussian prior on $C(\mO)$ and its RKHS is still given by $\mathcal{H}$ but with rescaled norm
\begin{equation*}
\norm{F}_{\mathcal{H}_{N}} = N^{d/(4\alpha+4+2d)} \norm{F}_{\mathcal{H}} \quad \text{for all $F\
\in\mathcal{H}$.} 
\end{equation*} 
We denote the push-forward posterior by 
\begin{equation*}
\tilde{\Pi}_{N}(\cdot|Y^{(N)},X^{(N)}) := \mL(f) ,\quad f=\Phi(F) : F\sim \Pi_{N}(\cdot|Y^{(N)},X^{(N)}). 
\end{equation*}
Our first result shows that posterior contracts about the ``ground truth'' $f_{0}$ in $L^{\infty}$-risk. 

\begin{theorem}\label{thm:1}
Let $d\in\mN$, let $\Omega$, $\mO$ and $\mD$ be nonempty bounded smooth domains in $\mR^{d}$ with $\overline{\mO}\subset\Omega$ and $\overline{\Omega}\cap\overline{\mD}=\emptyset$. Let $0<s<1$, $0\not\equiv\phi\in C_{c}^{\infty}(\Omega_{e})$ and $\Phi$ be the link function given in \Cref{assu:link}. 
Let $(\mathcal{H},\Pi')$ satisfy \Cref{assu:RKHS}, $\Pi_{N}$ be the rescaled prior given in \eqref{eq:rescaled-prior} and let the ``ground truth'' $f_{0}$ be given by $f_{0}=\Phi\circ F_{0}$, where $F_{0}\in H^{\alpha}(\mO)$ with $\supp\,(F_{0})\subset\mO$. Let $\Pi_{N}(\cdot|Y^{(N)},X^{(N)})$ be generated through the model \eqref{eq:statistical-model} of law $\mP_{F_{0}}^{N}$. Then for any $D>0$, there exist constants $C>0$ and $\mu>0$ such that 
\begin{equation*}
\tilde{\Pi}_{N}\left( f : \norm{f-f_{0}}_{L^{\infty}(\mO)} > C(\log N)^{-\mu} | Y^{(N)},X^{(N)} \right) = O_{\mP_{F_{0}}^{N}} (e^{-DN\delta_{N}^{2}}) 
\end{equation*}
as $N\rightarrow\infty$, where $\delta_{N}=N^{-\frac{\alpha}{2\alpha+d}}$. 
\end{theorem}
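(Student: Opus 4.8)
The plan is to obtain \Cref{thm:1} from the general posterior-contraction machinery for nonlinear Bayesian inverse problems of \cite{giordano2020consistency} (see also \cite{Nickl23StatisticInverseProblem}), once the two analytic ingredients — forward regularity and a refined single-measurement stability estimate — are in place. These are precisely the contents of \Cref{lem:qualitative}, \Cref{lem:forward-estimate} and the stability estimate proved in \Cref{sec_estimates}, which I take for granted here.

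\textbf{Step 1 (structural properties of the forward map).} First I record that $\calG(F)=G(\Phi\circ F)$ meets the framework's hypotheses. By \Cref{lem:qualitative}, $\calG$ maps into $C^{\infty}(\mD)$, so the observations in \eqref{eq:measurement-model} are pointwise well defined; the $L^{\infty}$-maximum-principle bound \eqref{eq:L-infty-bound} together with the singular-integral representation of $(-\Delta)^{s}u_{f}$ on $\mD$ — which lies at positive distance from $\Omega$ since $\overline{\Omega}\cap\overline{\mD}=\emptyset$ — gives a uniform bound $\norm{\calG(F)}_{L^{\infty}(\mD)}\le C(M_{0},\phi,\mD)$ over the whole parameter space. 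The quantitative input is the local Lipschitz forward estimate of \Cref{lem:forward-estimate}: on $H_{0}^{\beta}(\mO)$-bounded sets,
\[
\norm{\calG(F_{1})-\calG(F_{2})}_{L^{2}(\mD)}\lesssim \norm{\Phi\circ F_{1}-\Phi\circ F_{2}}_{L^{2}(\Omega)}\lesssim \norm{F_{1}-F_{2}}_{L^{2}(\mO)},
\]
the last step using that $\Phi$ has bounded derivatives (\Cref{assu:link}). Joint measurability of $(F,(y,x))\mapsto p_{F}(y,x)$ makes the posterior \eqref{eq:posterior} well defined.

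\textbf{Step 2 (contraction in the prediction metric).} I then run the testing / small-ball argument with respect to the prediction distance $d_{\calG}(F_{1},F_{2}):=\norm{\calG(F_{1})-\calG(F_{2})}_{L^{2}(\mu)}$, $\mu=\rmd x/\abs{\mD}$. The rescaling \eqref{eq:rescaled-prior}, with its exponent $d/(4\alpha+4+2d)$, is calibrated so that the Gaussian concentration function of $\Pi_{N}$ at $F_{0}\in H^{\alpha}(\mO)$ is of order $N\delta_{N}^{2}$ with $\delta_{N}=N^{-\alpha/(2\alpha+d)}$: the RKHS-approximation (bias) term is controlled because $F_{0}$ lies in the relevant interpolation scale of $\mathcal{H}$, and the small-ball term is controlled by the metric entropy of $H^{\beta}(\mO)$-balls, draws of $\Pi_{N}$ lying in $H^{\beta}(\mO)$ by \Cref{assu:RKHS}. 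Combined with the Lipschitz bound this yields (a) a prior-mass lower bound of order $e^{-CN\delta_{N}^{2}}$ for the relevant Kullback–Leibler-type neighborhood of $F_{0}$, and (b) for a sieve $\mathcal{S}_{N}:=\{\norm{F}_{H^{\beta}(\mO)}\le \rho_{N}\}$ with $\Pi_{N}(\mathcal{S}_{N}^{c})$ exponentially small and $d_{\calG}$-metric entropy of $\mathcal{S}_{N}$ at resolution $\eps\gtrsim\delta_{N}$ bounded by $\lesssim N\delta_{N}^{2}$, exponentially consistent tests of $H_{0}:F=F_{0}$ against $\{F\in\mathcal{S}_{N}:d_{\calG}(F,F_{0})>M\delta_{N}\}$. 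Feeding (a) and (b) into the contraction theorem of \cite{giordano2020consistency} — in the version giving an arbitrary exponential rate, obtained by enlarging the prior-mass constant — yields, for every $D>0$,
\[
\Pi_{N}\big(F:d_{\calG}(F,F_{0})>M\delta_{N}\mid Y^{(N)},X^{(N)}\big)=O_{\mP_{F_{0}}^{N}}\big(e^{-DN\delta_{N}^{2}}\big),
\]
together with $\Pi_{N}(\mathcal{S}_{N}^{c}\mid Y^{(N)},X^{(N)})=O_{\mP_{F_{0}}^{N}}(e^{-DN\delta_{N}^{2}})$.

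\textbf{Step 3 (transfer to $L^{\infty}(\mO)$ via stability).} Finally I invoke the refined logarithmic stability estimate for the single-measurement fractional Calder\'on problem. On the event $\{\norm{F}_{H^{\beta}(\mO)}\le\rho_{N}\}$, the Sobolev embedding $H^{\beta}(\mO)\hookrightarrow C^{s}(\overline{\Omega})$ (valid since $\beta>1+d/2>s$) and boundedness of $\Phi,\Phi'$ give uniform a priori bounds on $\norm{\Phi\circ F}_{C^{s}(\Omega)}$ and $\norm{f_{0}}_{C^{s}(\Omega)}$; the stability estimate then yields, for some $\theta=\theta(s,d,M_{0},\rho_{N})>0$,
\[
\norm{\Phi\circ F-f_{0}}_{L^{\infty}(\mO)}\lesssim\Big(\log\big(1/\norm{\calG(F)-\calG(F_{0})}_{L^{2}(\mD)}\big)\Big)^{-\theta}.
\]
On the intersection of the two high-probability events from Step 2 one has $\norm{\calG(F)-\calG(F_{0})}_{L^{2}(\mD)}\lesssim M\delta_{N}$ and $\log(1/\delta_{N})\simeq\log N$, so $\norm{\Phi\circ F-f_{0}}_{L^{\infty}(\mO)}\lesssim(\log N)^{-\mu}$ for a suitable $\mu>0$; since $\tilde{\Pi}_{N}=\mathcal{L}(\Phi(F))$, pushing this through and collecting the exceptional probabilities gives the claimed $O_{\mP_{F_{0}}^{N}}(e^{-DN\delta_{N}^{2}})$ bound.

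\textbf{Expected main obstacle.} The genuinely hard steps are the analytic inputs rather than the probabilistic assembly: proving the $L^{2}\to L^{2}$ local Lipschitz forward estimate (\Cref{lem:forward-estimate}) is delicate because the fractional elliptic problem lacks the interior regularity of its local counterpart, forcing an argument through the Alessandrini identity with a careful choice of (up to three) test functions, the improved elliptic estimate of \cite{covi2022globalinversefractionalconductivity}, and a cutoff/approximation step; and the stability estimate requires refining \cite{Rueland2021SingleMeasurement,GRSU20Reconstruction} so as to make the dependence on $\norm{f_{1}}_{C^{s}(\Omega)},\norm{f_{2}}_{C^{s}(\Omega)}$ explicit. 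Within the proof of \Cref{thm:1} itself, the one point needing care is keeping every constant uniform over the $H^{\beta}$-sieve, so that the logarithmic stability bound — whose modulus $\theta$ degrades as the $C^{s}$ a priori bound grows — may legitimately be applied on the high-probability event, while checking that this degradation remains compatible with the polynomial decay of $\delta_{N}$.
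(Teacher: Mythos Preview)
Your proposal follows the paper's approach: verify the forward-map hypotheses (uniform $L^\infty$ bound and $L^2$-Lipschitz estimate), apply \cite[Theorem~14]{giordano2020consistency} to obtain $d_{\calG}$-contraction together with a posterior regularity bound, and then invoke the refined stability estimate (\Cref{prop:inverse-estimate}) to convert to an $L^\infty(\mO)$ rate of order $(\log N)^{-\mu}$.

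The one refinement worth pointing out --- and it dissolves exactly the obstacle you flag at the end --- concerns the regularity event in Step~3. The paper does not work on a growing sieve $\{\|F\|_{H^\beta}\le\rho_N\}$; instead, \cite[Theorem~14]{giordano2020consistency} delivers directly a \emph{fixed} bound
\[
\Pi_N\bigl(\|F\|_{C^1(\mO)}>M \,\big|\, Y^{(N)},X^{(N)}\bigr)=O_{\mP_{F_0}^N}\bigl(e^{-DN\delta_N^2}\bigr)
\]
with $M$ independent of $N$, which is precisely what the prior rescaling \eqref{eq:rescaled-prior} buys. On that event the $C^s$-norm of $\Phi\circ F$ is bounded by a genuine constant, so the prefactor $(1+\|f\|_{C^s}^\gamma)$ in \Cref{prop:inverse-estimate} is harmless and the $(\log N)^{-\mu}$ rate follows at once. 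By contrast, with a sieve radius $\rho_N$ growing polynomially in $N$ this prefactor would swamp $(\log N)^{-\mu}$ and your Step~3 would actually fail; so your concern is well placed, but its resolution is already built into the black-box result you cite. A minor correction in the same vein: in \Cref{prop:inverse-estimate} the logarithmic exponent $\mu$ does \emph{not} depend on the $C^s$ a~priori bound --- only the multiplicative prefactor does --- so writing $\theta=\theta(\ldots,\rho_N)$ mislocates the dependence.
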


To obtain an estimator of the unknown ``true'' coefficient $f_{0}$, in view of the link function $\Phi$, it is often convenient to derive an estimator of $F_{0}$. One natural choice of such estimator is the posterior mean $\overline{F}_{N} := \bbE^{\Pi}(F|Y^{(N)},X^{(N)})$ of $\Pi_{N}(\cdot|Y^{(N)},X^{(N)})$, which can be approximated numerically by an MCMC algorithm, see \Cref{sec:MCMC}. Our second result proves a contraction rate for the convergence $\overline{F}_{N}$ to $F_{0}$. 

\begin{theorem}\label{thm:2}
Assume that the hypotheses of \Cref{thm:1} hold. Then, there exist constants $\tilde{C}>0$ and $\mu>0$ such that 
\begin{equation*}
\mP_{F_{0}}^{N} \left( \norm{\overline{F}_{N}-F_{0}}_{L^{\infty}(\mO)} > \tilde{C}(\log N)^{-\mu} \right) \rightarrow 0 \quad \text{as $N\rightarrow\infty$.}
\end{equation*}
As an immediate consequence, there exist constants $\tilde{C}'>0$ and $\mu>0$ such that 
\begin{equation*}
\mP_{f_{0}}^{N} \left( \norm{\Phi(\overline{F}_{N})-f_{0}}_{L^{\infty}(\mO)} > \tilde{C}'(\log N)^{-\mu} \right) \rightarrow 0 \quad \text{as $N\rightarrow\infty$.}
\end{equation*}
\end{theorem}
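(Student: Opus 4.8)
The plan is to deduce Theorem~\ref{thm:2} from the contraction result of Theorem~\ref{thm:1} by the standard "convexification + uniform boundedness" argument used in \cite{giordano2020consistency,NvdGW20JUQ}. The key observation is that, because the posterior mean is an average, Jensen-type control of the tails of the posterior measure translates directly into an $L^{\infty}$ bound for $\overline{F}_{N}$, \emph{provided} one already knows that the posterior sits inside a fixed bounded set of a smoother space with overwhelming probability. So I would split the argument into three ingredients and then assemble them.

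First, I would establish an a~priori bound: the rescaled prior $\Pi_{N}$ puts essentially all mass on a ball of $H_{0}^{\beta}(\mO)$ of radius a slowly growing power of $N$, and the posterior inherits this because the likelihood is bounded above by one. More precisely, using Borell--TIS together with the fact that $\|F\|_{\mathcal{H}_{N}}=N^{d/(4\alpha+4+2d)}\|F\|_{\mathcal{H}}$, one gets $\Pi_{N}(\|F\|_{H^{\beta}} > R_{N})\le e^{-cN\delta_{N}^{2}}$ for a suitable $R_{N}\simeq N^{a}$ with $a$ small, and hence the same bound for the posterior after dividing by the evidence (which is bounded below on an event of $\mP_{F_{0}}^{N}$-probability tending to one by the usual small-ball/Kullback--Leibler argument). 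On this event the posterior mean $\overline{F}_{N}$ also lies in $H_{0}^{\beta}(\mO)$ with $\|\overline{F}_{N}\|_{H^{\beta}}\lesssim R_{N}$ by convexity of the norm ball and the definition of the Bochner integral.

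Second, I would combine this a~priori Sobolev bound with the concentration statement of Theorem~\ref{thm:1}. Write $f=\Phi(F)$ and $f_{0}=\Phi(F_{0})$. Theorem~\ref{thm:1} gives, with $\mP_{F_{0}}^{N}$-probability tending to one, that $\tilde\Pi_{N}(\|f-f_{0}\|_{L^{\infty}(\mO)}>C(\log N)^{-\mu}\,|\,Y^{(N)},X^{(N)})$ is exponentially small; since $\Phi$ is a bi-Lipschitz diffeomorphism onto its range (it is smooth with positive derivative, and on the relevant bounded set its derivative is bounded below), this is equivalent to an analogous statement for $\|F-F_{0}\|_{L^{\infty}(\mO)}$. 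Now decompose
\[
\overline{F}_{N}-F_{0}
= \int_{A_{N}} (F-F_{0})\,\rmd\Pi_{N}(F\mid Y^{(N)},X^{(N)})
+ \int_{A_{N}^{c}} (F-F_{0})\,\rmd\Pi_{N}(F\mid Y^{(N)},X^{(N)}),
\]
where $A_{N}=\{F:\|F-F_{0}\|_{L^{\infty}(\mO)}\le C(\log N)^{-\mu}\}$. The first integral is bounded in $L^{\infty}$ by $C(\log N)^{-\mu}$. For the second, use Cauchy--Schwarz (in the posterior measure) together with the a~priori bound: the integrand is at most $\|F\|_{L^{\infty}}+\|F_{0}\|_{L^{\infty}}\lesssim R_{N}$ on the good event (using $H^{\beta}\hookrightarrow L^{\infty}$ since $\beta>d/2$), while $\Pi_{N}(A_{N}^{c}\mid \cdot)\le e^{-DN\delta_{N}^{2}}$; choosing $D$ large enough that $R_{N}e^{-DN\delta_{N}^{2}}\to 0$ (which is possible because $R_{N}$ is only polynomial in $N$ whereas $N\delta_{N}^{2}=N^{d/(2\alpha+d)}\to\infty$) makes the second term negligible compared with $(\log N)^{-\mu}$. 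Taking a (possibly smaller) exponent $\mu$ then yields $\|\overline{F}_{N}-F_{0}\|_{L^{\infty}(\mO)}\le \tilde C(\log N)^{-\mu}$ on an event of probability tending to one, which is the first display.

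Finally, the second display follows immediately: on the same event, $\|\Phi(\overline F_{N})-f_{0}\|_{L^{\infty}(\mO)}=\|\Phi(\overline F_{N})-\Phi(F_{0})\|_{L^{\infty}(\mO)}\le (\sup|\Phi'|)\,\|\overline F_{N}-F_{0}\|_{L^{\infty}(\mO)}\le \tilde C'(\log N)^{-\mu}$ by Assumption~\ref{assu:link}\ref{itm:link-ii}, and $\mP_{f_{0}}^{N}=\mP_{F_{0}}^{N}$ by the definition of the reparametrized model. I expect the only genuinely delicate point to be the interplay of the exponents in the second step: one must verify that the a~priori radius $R_{N}$ coming from the Borell--TIS/evidence lower-bound estimate grows strictly slower than $e^{DN\delta_{N}^{2}}$ for the $D$ supplied by Theorem~\ref{thm:1} (and, if not, re-run Theorem~\ref{thm:1} with a larger $D$, which its statement explicitly allows). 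Everything else is a routine transcription of the argument in \cite[Section~5]{giordano2020consistency}.
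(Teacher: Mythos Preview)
Your overall strategy is correct and is essentially the same route the paper takes: both reduce to \cite[Theorem~6]{giordano2020consistency}, use Jensen/convexity to pass from the posterior to its mean, split the resulting integral over a ``good'' set where Theorem~\ref{thm:1} applies and a complement of exponentially small posterior mass, and control the latter by Cauchy--Schwarz together with an a~priori regularity bound on the posterior. The second display then follows from the Lipschitz bound on $\Phi$ in Assumption~\ref{assu:link}.

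There is, however, one genuine gap in your execution. In step~2 you want to pass from the contraction of $\|f-f_0\|_{L^\infty}$ supplied by Theorem~\ref{thm:1} to a contraction of $\|F-F_0\|_{L^\infty}$, invoking that $\Phi$ is bi-Lipschitz ``on the relevant bounded set''. But the only bounded set you have produced is the $H^\beta$-ball of radius $R_N\simeq N^a$ from your step~1, and on such a \emph{growing} ball the quantity $\inf_{|z|\le R_N}\Phi'(z)$ can decay super-polynomially (for a sigmoidal $\Phi$ it behaves like $e^{-R_N}$), so the inverse Lipschitz constant blows up faster than any power of $\log N$ and the translation fails. The paper circumvents this by using the stronger concentration on a \emph{fixed} ball $\{F:\|F\|_{C^1(\mO)}\le M\}$, displayed as \eqref{eq:C1} in the proof of Theorem~\ref{thm:1} and coming directly from \cite[Theorem~14]{giordano2020consistency}; on that fixed ball $\Phi'$ is bounded below by a constant independent of $N$, and the mean value/inverse function argument the paper alludes to goes through. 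Once you replace your polynomially growing $R_N$ by this fixed $M$ at the point where you invert $\Phi$, your argument is complete and coincides with the paper's.
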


\subsection{Results with Gaussian sieve priors}

From a computational perspective, it is useful to consider sieve priors that are finite-dimensional approximations of the function space supporting the prior. Here we will use a truncated Karhunen-Lo{\`e}ve type expansion in terms of Daubechies wavelets considered in  \cite[Appendix~B]{giordano2020consistency} or \cite[Chapter~4]{gine2021mathematical}. Let $\{\Psi_{\ell r}: \ell \ge -1, r\in \mZ^d  \}$ be the (d-dimensional) compactly supported Daubechies wavelets\footnote{This can be easily constructed from the 1-dimensional Daubechies wavelets as in \cite[Theorem~4.2.10]{gine2021mathematical}, and the scaling functions (in the sense of \cite[Definition~4.2.1]{gine2021mathematical}) are interpreted as the `first' wavelet due to the wavelet series expansion \cite[(4.32)]{gine2021mathematical}.}, which forms an orthonormal basis of $L^2(\mR^d)$. Let $\mathcal{K}$ be a compact set in $\mO$ and for each integer $\ell \ge -1$, let ${\mathcal R}_{\ell} = \left\{ r \in \mZ^{d} : \supp\,(\Psi_{\ell r}) \cap \mathcal{K} \neq \emptyset \right\}$. Let $\mathcal{K}'$ be another compact subset in $\mO$ such that $\mathcal{K} \subsetneq \mathcal{K}'$, and let $\chi \in C_{c}^{\infty}(\mO)$ be a cut-off function with $\chi=1$ on $\mathcal{K}'$. For any real $t > d/2$, we consider the prior (introduced in \cite[Section~2.2.3 and Remark~26]{giordano2020consistency}) 
\begin{equation}
\Pi_{j}'={\mathcal L}(\chi F_{j}),\quad F_{j} 
=\sum_{\begin{subarray}{c} -1\le\ell \le j \\ r\in{\mathcal R}_{\ell} \end{subarray}} 2^{-\ell t}
F_{\ell r}\Psi_{\ell r},\;\; F_{\ell r}\overset{\rm iid}{\sim}{\mathcal N}(0,1), \label{eq:sieve-prior}
\end{equation}
where $j\in\mN$ is a truncation level. According to \cite[Exercise~2.6.5]{gine2021mathematical}, for each $j\in\mN$, the prior $\Pi_{j}$ above defines a centered Gaussian probability measure supported on the finite dimensional subspace 
\begin{equation*}
\mathcal{H}_{j} := {\rm span}\, \{ \chi \Psi_{\ell r} : -1 \le \ell \le j , r \in \mathcal{R}_{\ell} \},
\end{equation*}
with RKHS norm given in \cite[(B2)]{giordano2020consistency}. 

\begin{theorem}\label{thm:3} 
Consider the rescaled prior $\Pi_{N}$ defined on \eqref{eq:rescaled-prior} with $F'\sim\Pi_{j(N)}'$ such that  $2^{j(N)}\simeq N^{1/(2\alpha+d)}$. Then the conclusions in \Cref{thm:1} and \Cref{thm:2} remain valid. 
\end{theorem}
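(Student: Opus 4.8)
The plan is to show that, once the truncation level is calibrated by $2^{j(N)}\simeq N^{1/(2\alpha+d)}$, the Gaussian sieve prior $\Pi_{N}$ (with $F'\sim\Pi_{j(N)}'$) satisfies exactly the same prior-side hypotheses as the rescaled Gaussian prior of \eqref{eq:rescaled-prior}, so that the proofs of \Cref{thm:1} and \Cref{thm:2} go through with no change in the analysis of the fractional forward and inverse problems. The analytic inputs — uniform boundedness of $\calG$, the local Lipschitz forward estimate of \Cref{sec_forward_estimates}, and the logarithmic inverse stability estimate refined from \cite{Rueland2021SingleMeasurement} — are statements about $\calG$ alone and do not involve the prior; they have already been established in \Cref{sec_estimates}. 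Hence only the Gaussian-prior inputs to the abstract contraction argument of \cite{giordano2020consistency} need to be re-checked for the sieve prior, and I would carry this out by verifying the relevant small-ball, bias, and complexity estimates.

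First I would isolate the three prior-dependent quantities, expressed through the truncation level $j=j(N)$ and the rate $\delta_{N}=N^{-\alpha/(2\alpha+d)}$ of \Cref{thm:1}: (a) a bias/approximation bound, namely the existence of $h_{N}$ in the RKHS $\mathcal{H}_{j}$ of $\Pi_{j}'$ (hence, after the rescaling in \eqref{eq:rescaled-prior}, in the RKHS of $\Pi_{N}$) with $\norm{h_{N}-F_{0}}_{L^{\infty}(\mO)}\lesssim\delta_{N}$ and $\norm{h_{N}}_{\mathcal{H}_{j}}^{2}\lesssim N\delta_{N}^{2}$; (b) a small-ball estimate $-\log\Pi_{j}'(\norm{F}_{L^{\infty}(\mO)}\le\delta_{N})\lesssim N\delta_{N}^{2}$; and (c) control of the prior mass of the complements of $H^{\beta}(\mO)$-balls of at most polynomially growing radius, together with the finite-dimensional metric-entropy bound used to build the tests. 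For (a) one takes $h_{N}=\chi P_{j}F_{0}$, where $P_{j}$ is the wavelet projection at level $j$ and $\mathcal{K}$ is chosen so that $\supp F_{0}\subset\mathcal{K}$ (possible since $\supp F_{0}$ is compact in $\mO$), which guarantees $h_{N}\in\mathcal{H}_{j}$; the estimates $\norm{h_{N}-F_{0}}_{L^{\infty}}\lesssim 2^{-j(\alpha-d/2)}\norm{F_{0}}_{H^{\alpha}}$ and $\norm{h_{N}}_{\mathcal{H}_{j}}^{2}\lesssim\norm{F_{0}}_{H^{\alpha}}^{2}$, uniformly in $j$, are the standard Daubechies-wavelet bounds of \cite[Chapter~4]{gine2021mathematical}, using the explicit RKHS norm of \cite[(B2)]{giordano2020consistency} with the weight exponent chosen as in \cite[Remark~26]{giordano2020consistency}. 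For (b), since $\Pi_{j}'$ lives on the space $\mathcal{H}_{j}$ of dimension $\simeq 2^{jd}$, the small-ball exponent is $\lesssim 2^{jd}\log(1/\delta_{N})$; for (c), $\mathcal{H}_{j}\subset H^{\beta}(\mO)$ is finite-dimensional and the $H^{\beta}$-norm of a draw grows only polynomially in $j$. With the calibration $2^{j(N)}\simeq N^{1/(2\alpha+d)}$ one has $2^{-j(N)\alpha}\simeq\delta_{N}$ and $2^{j(N)d}\simeq N^{d/(2\alpha+d)}\simeq N\delta_{N}^{2}$, so (a)--(c) hold up to logarithmic factors, and the effect of the extra $N^{-d/(4\alpha+4+2d)}$ rescaling on these bounds is identical to its effect in the proof of \Cref{thm:1}. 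Feeding (a)--(c) into the abstract argument reproduces the concentration statement of \Cref{thm:1} with the same $(\log N)^{-\mu}$ radius and the same $O_{\mP_{F_{0}}^{N}}(e^{-DN\delta_{N}^{2}})$ probability, and the posterior-mean bound of \Cref{thm:2} then follows as before.

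The main obstacle is the same logarithmic-stability bottleneck present in the Gaussian-prior case: the polynomial rate $\delta_{N}$ is usable only inside the small-ball and excess-mass estimates, whereas the reconstruction error is governed by the logarithmic modulus of continuity of the inverse map. One must therefore check that the sieve-induced bias $2^{-j(N)\alpha}$, once passed through that modulus, still contributes only at order $(\log N)^{-\mu}$ — which holds precisely because the calibration forces the bias to be polynomially small in $N$, so the extra logarithmic factor in the small-ball exponent is immaterial — and simultaneously that the finite-dimensional small-ball and Sobolev-ball mass estimates assemble into the bound $N\delta_{N}^{2}$ (up to logarithms) on the Gaussian concentration function $\varphi_{F_{0}}(\delta_{N})$. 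This is essentially the bookkeeping already performed in \cite[Appendix~B and Remark~26]{giordano2020consistency}; once it is in place, no further analysis of the fractional Schr\"odinger forward or inverse problem is required, and \Cref{thm:1}--\Cref{thm:2} transfer verbatim to the sieve prior.
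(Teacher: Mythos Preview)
Your proposal is correct and follows essentially the same approach as the paper: both observe that the forward and inverse estimates of \Cref{sec_estimates} are prior-independent, so only the Gaussian prior-side conditions need to be re-verified for the sieve prior, and both defer this verification to the framework of \cite{giordano2020consistency} (the paper cites \cite[Section~3.2]{giordano2020consistency}, you cite \cite[Appendix~B and Remark~26]{giordano2020consistency}). Your sketch is considerably more explicit than the paper's one-sentence deferral, spelling out the bias, small-ball, and complexity bounds and how the calibration $2^{j(N)}\simeq N^{1/(2\alpha+d)}$ balances them, but the underlying argument is the same.
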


The proof of \Cref{thm:3} only requires minor modification from the proof of \Cref{thm:1} and \Cref{thm:2}, and all necessary modifications are listed in \cite[Section~3.2]{giordano2020consistency}. Therefore we omit the details. 

\subsection{Tightness of the logarithmic contraction rate}

The next theorem shows that the logarithmic contraction rate (in \Cref{thm:1,thm:2,thm:3}) is optimal in the statistical minimax sense. We show this in a special case of $\Om=\{x\in\mR^d : \abs{x}<1\}$ and $\mD=\{x\in\mR^d : 2\leq\abs{x}<3\}$ since we will use the instability result for the Calder\'{o}n problem for the fractional Schr\"{o}dinger equation proved in \cite{RS18Instability}.

\begin{theorem}\label{thm_minimax_optimal}
    Let $d\in\mN$, $\beta>1+\frac{d}{2}$ be an integer, $M_0>0$, $\Om=\{x\in\mR^d : \abs{x}<1\}$ and $\mD=\{x\in\mR^d : 2\leq\abs{x}<3\}$. Then for any $\mu>2+\frac{3}{d}$ and any $\e\in(0,1)$
    \begin{equation*}
        \inf_{\tilde f}\sup_{f\in\mF_{M_{0}}^{\beta}}\mP^N_f\left(\norm{\tilde f-f}_{L^{\infty}(\mO)}\geq\frac{1}{2}(\log N)^{-\mu}\right)\geq 1-\e
    \end{equation*}
    where the infimum is taken over all estimators $\tilde f=\tilde f(Y)$, namely measurable functions, of the data $Y\sim\mP^N_f$.
\end{theorem}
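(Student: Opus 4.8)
The plan is to prove the minimax lower bound by a standard reduction to a two-point (or multi-point) testing argument, driven by the instability result for the fractional Calder\'on problem from \cite{RS18Instability}. The key mechanism is that instability of the inverse map forces any estimator to fail with high probability at a rate no better than the logarithmic one, because there exist potentials that are $L^\infty$-far apart yet produce DN data that are exponentially close, hence statistically indistinguishable from $N$ noisy samples.

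First I would recall the precise form of the instability result: there is a sequence of pairs $f_{1}^{(k)}, f_{2}^{(k)} \in \mF_{M_{0}}^{\beta}$ (supported in $\mO$, equal to $1$ near $\partial\Omega$) with $\norm{f_{1}^{(k)}-f_{2}^{(k)}}_{L^{\infty}(\mO)} \gtrsim (\log \omega_k)^{-\mu_0}$ for the exponent $\mu_0$ governed by $\beta$ and $d$, while the forward data obey $\norm{G(f_{1}^{(k)}) - G(f_{2}^{(k)})}_{L^{\infty}(\mD)} \lesssim \omega_k^{-1}$ (or $e^{-c\omega_k}$), with $\omega_k \to \infty$. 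The second step is to translate closeness of the forward maps into closeness of the statistical experiments: since $G(f)$ is pointwise well-defined and bounded on $\mD$ (\Cref{lem:qualitative} and the uniform boundedness of the forward map), the Kullback--Leibler divergence between the $N$-fold product laws satisfies
\begin{equation*}
\KL(\mP_{f_1}^{N} \,\|\, \mP_{f_2}^{N}) = \frac{N}{2\sigma^2}\, \bbE_{X\sim\mu} \big[ (G(f_1)(X) - G(f_2)(X))^2 \big] \le \frac{N}{2\sigma^2} \norm{G(f_1)-G(f_2)}_{L^\infty(\mD)}^2 .
\end{equation*}
The third step is to choose the index $k = k(N)$ so that this KL divergence stays bounded (say $\le \log 2$) while the separation $\norm{f_1^{(k)}-f_2^{(k)}}_{L^\infty}$ is still at least $(\log N)^{-\mu}$ for the claimed range $\mu > 2 + 3/d$. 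Concretely, if the data-closeness is polynomial, $\omega_k^{-1}$, one picks $\omega_{k(N)} \simeq \sqrt{N}$, so that the separation is of order $(\log \sqrt N)^{-\mu_0} \simeq (\log N)^{-\mu_0}$; matching constants and the precise value of $\mu_0$ coming from \cite{RS18Instability} on the ball produces the stated threshold $2 + 3/d$. Then a standard two-point argument (e.g. \cite[Theorem~6.3.2]{gine2021mathematical} or Le Cam's two-point lemma) gives: for any estimator $\tilde f$,
\begin{equation*}
\sup_{f\in\mF_{M_0}^\beta} \mP_f^N\Big( \norm{\tilde f - f}_{L^\infty(\mO)} \ge \tfrac12 (\log N)^{-\mu} \Big) \ge \frac14 e^{-\KL(\mP_{f_1}^{N}\|\mP_{f_2}^{N})} \ge \frac14 e^{-\log 2} = \frac18 ,
\end{equation*}
and by pushing $\omega_{k(N)}$ slightly below $\sqrt N$ (using that $\mu$ is strictly above the critical exponent) the KL divergence can be made as small as we like, so the constant $1/8$ improves to $1-\e$ for any $\e\in(0,1)$ once $N$ is large; small $N$ are handled by enlarging constants. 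This closes the proof.

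The main obstacle I expect is Step 3: extracting from \cite{RS18Instability} the instability estimate in exactly the normalization needed here — namely controlling the data in the $L^\infty(\mD)$ norm at fixed $\phi$ (rather than, say, an operator norm of the full DN map or a Sobolev norm), and keeping track of the regularity class $C^\beta$ / $H^\beta$ versus the $H^\beta$ with vanishing normal derivatives used in the definition of $\mF_{M_0}^\beta$. One must check that the instability construction produces perturbations lying in $\mF_{M_0}^\beta(\mO)$ (compactly supported in $\mO$, $1$ near $\partial\Omega$, the correct boundary vanishing conditions), possibly after a cutoff and rescaling that does not destroy the $L^\infty$ separation; this is the analogue of the cutoff/approximation bookkeeping that appears in the forward estimates. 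A secondary technical point is justifying the KL identity for the sampled model: because $X_i$ are random and $G(f)$ is only guaranteed smooth on $\mD$, one needs the uniform $L^\infty(\mD)$ bound on $G(f)$ over $\mF_{M_0}^\beta$ (available from the earlier forward analysis) so that the Gaussian-location KL computation is valid and finite. Once these normalization and membership issues are settled, the remaining argument is the routine two-point reduction and the elementary calibration of $k(N)$.
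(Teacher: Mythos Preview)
Your proposal is correct and follows the same overall strategy as the paper: reduce to a two-point testing problem, bound the Kullback--Leibler divergence between the product laws in terms of the forward-data difference, and feed in the exponential instability from \cite{RS18Instability} with the separation scale calibrated to $N$.

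Two small differences are worth noting. First, you bound $\KL$ via $\norm{G(f_1)-G(f_2)}_{L^\infty(\mD)}$ and then worry (rightly) that \cite{RS18Instability} only gives the $L^2\to L^2$ operator norm of $\Lambda_{f_1}-\Lambda_{f_2}$. The paper avoids this detour: since $X_i\sim\mu$ is uniform on $\mD$, the KL divergence is $\frac{N}{2\sigma^2|\mD|}\norm{G(f_1)-G(f_2)}_{L^2(\mD)}^2$, and $\norm{G(f_1)-G(f_2)}_{L^2(\mD)}=\norm{(\Lambda_{f_1}-\Lambda_{f_2})\phi}_{L^2(\mD)}\le \norm{\Lambda_{f_1}-\Lambda_{f_2}}_{L^2\to L^2}\norm{\phi}_{L^2}$, so the operator-norm instability applies directly without any $L^\infty$ upgrade. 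Second, instead of the Le Cam bound $\tfrac14 e^{-\KL}$ you invoke, the paper runs a slightly different computation via the event $B=\{p_{f_0}^{\otimes N}/p_{f_1}^{\otimes N}\ge 1/2\}$, Markov's inequality, and the second Pinsker inequality; both routes yield the same conclusion that the testing error is $\ge 1-\e$ once $\KL$ is made small. Your concern about membership of the instability potentials in $\mF_{M_0}^\beta(\mO)$ is legitimate bookkeeping that the paper's proof does not spell out either.
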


\subsection{Remarks on the Bernstein-von Mises (BvM) theorem} 

Consistency alone does not suffice for uncertainty quantification. A central goal would be to establish a Bernstein-von Mises (BvM) theorem characterizing the asymptotic shape of the posterior distribution. Although this theorem generally fails in infinite-dimensional settings (see \cite[Remark~9]{nickl2020bernstein}), progress has been made for the Calder{\'o}n problem with piecewise-constant conductivities, corresponding to a parametric setting \cite{Boh2023BvM}. Further results in semiparametric settings can be found in \cite{nickl2020bernstein,Nickl2024BvMtime}.

\section{Estimates for the fractional Schr\"{o}dinger equation}\label{sec_estimates}

\subsection{Estimates for the forward problem}\label{sec_forward_estimates}

The following lemma, which can be proved using a similar argument to the one in \cite[Theorem~1.3.16]{PSU_book}, establishes that $(-\Delta)^{s}u_{f} \in C^{\infty}(\Omega_{e})$ for $\phi\in C_{c}^{\infty}(\Omega_{e})$. Let us recall that the singular support of a distribution $u \in \mathcal{D}'(\R^d)$ is defined as a complement of the largest set where $u$ is smooth. In particular, $$\ssupp(u) := \R^d \setminus \{\, x \in \R^d \,:\,\text{$u$ is smooth near $x$}\}.$$

\begin{lemma}\label{lem:qualitative}
Let $d\in\mN$, $0<s<1$ and $\Omega\subset\mR^{d}$ be a nonempty bounded smooth domain. Let $\phi\in C_{c}^{\infty}(\Omega_{e})$, $f\in L^{\infty}(\Omega)$ and $u=u_{f}\in H^{s}(\mR^{d})$ be the unique solution of \eqref{eq:Dirichlet-problem}. Then 
\begin{equation*}
\ssupp((-\Delta)^s u) \cap \Omega_e = \emptyset 
\end{equation*}
\end{lemma}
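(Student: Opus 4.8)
The plan is to show that $(-\Delta)^s u$ is smooth in a neighborhood of every point $x_0 \in \Omega_e$. Fix such an $x_0$ and choose a small ball $B = B(x_0,2\rho)$ with $\overline{B} \subset \Omega_e$ (possible since $\Omega_e$ is open); in particular $B$ does not meet $\Omega$. Since $u = u_f \in H^s(\mR^d)$ solves $(-\Delta)^s u = -fu$ in $\Omega$ with $u|_{\Omega_e} = \phi$, and $\phi \in C_c^\infty(\Omega_e)$, we know $u = \phi$ is smooth on $B$. The issue is that $(-\Delta)^s$ is nonlocal, so smoothness of $u$ on $B$ does not immediately give smoothness of $(-\Delta)^s u$ on $B$: we must control the contribution of $u$ from outside $B$.

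The key step is a standard splitting argument analogous to \cite[Theorem~1.3.16]{PSU_book}. Write $u = \chi u + (1-\chi)u =: u_1 + u_2$ where $\chi \in C_c^\infty(B(x_0,2\rho))$ with $\chi \equiv 1$ on $B(x_0,\rho)$. Then $u_1 = \chi\phi \in C_c^\infty(\mR^d)$, so $(-\Delta)^s u_1 \in C^\infty(\mR^d)$ (indeed it is Schwartz-like; one can see this via the Fourier multiplier $|\xi|^{2s}$ applied to a smooth compactly supported function, or directly from the singular integral together with decay estimates). For the second term, note $u_2 \in H^s(\mR^d)$ vanishes identically on $B(x_0,\rho)$. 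Using the singular integral representation
\begin{equation*}
(-\Delta)^s u_2(x) = c_{d,s}\, \mathrm{p.v.}\!\int_{\mR^d} \frac{u_2(x) - u_2(y)}{|x-y|^{d+2s}}\,\rmd y = -c_{d,s} \int_{\mR^d} \frac{u_2(y)}{|x-y|^{d+2s}}\,\rmd y \quad \text{for } x \in B(x_0,\rho/2),
\end{equation*}
the integrand is nonsingular for $x$ near $x_0$ because $u_2$ is supported away from $x_0$; differentiating under the integral sign (the kernel $x \mapsto |x-y|^{-d-2s}$ is smooth and has all derivatives bounded uniformly for $x \in B(x_0,\rho/4)$, $|y-x_0|\ge \rho$, and $u_2 \in L^2 \subset L^1_{loc}$ with the tail controlled since $|x-y|^{-d-2s}$ is integrable near infinity against an $L^2$ function — or one first localizes the far field as well) shows $(-\Delta)^s u_2 \in C^\infty(B(x_0,\rho/4))$. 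Hence $(-\Delta)^s u = (-\Delta)^s u_1 + (-\Delta)^s u_2$ is smooth near $x_0$, so $x_0 \notin \ssupp((-\Delta)^s u)$. Since $x_0 \in \Omega_e$ was arbitrary, $\ssupp((-\Delta)^s u) \cap \Omega_e = \emptyset$.

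The main obstacle is the rigorous justification of differentiating under the integral sign for the far-field term $(-\Delta)^s u_2$: one needs that $u_2$, which is only in $H^s(\mR^d) \hookrightarrow L^2(\mR^d)$, produces an integral $\int u_2(y)|x-y|^{-d-2s}\rmd y$ that is finite and smooth in $x$. This is fine because for $x$ in a fixed small ball around $x_0$ and $y$ outside $B(x_0,\rho)$, the kernel and all its $x$-derivatives are bounded by $C|x-y|^{-d-2s}$ (for $|x-y|$ bounded) and decay like $|y|^{-d-2s}$ at infinity, which is square-integrable in $y$ near infinity; combined with $u_2 \in L^2$ and dominated convergence this yields $C^\infty$ dependence. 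A clean alternative, matching the cited reference more closely, is to phrase everything in terms of pseudodifferential/Fourier-analytic pseudolocality of $(-\Delta)^s$ — $(-\Delta)^s$ is a classical $\Psi$DO of order $2s$ and hence pseudolocal, so $\ssupp((-\Delta)^s u) \subset \ssupp(u) \subset \overline{\Omega}$, and then one uses the interior equation $(-\Delta)^s u = -fu$ only implicitly. I would present the elementary splitting argument since it is self-contained and avoids invoking $\Psi$DO machinery for non-smooth symbols/low-regularity $u$.
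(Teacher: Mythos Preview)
Your proof is correct but takes a different route from the paper's. The paper decomposes in \emph{frequency} rather than in space: it writes $(-\Delta)^s u = Eu + Su$ where $S$ has Fourier multiplier $\chi(\xi)|\xi|^{2s}$ with $\chi$ a smooth cutoff near the origin, and $E$ has multiplier $(1-\chi(\xi))|\xi|^{2s}$. The low-frequency piece $Su$ is smooth because its Fourier transform is compactly supported and in $L^1$; the high-frequency piece $E$ is a genuine classical elliptic $\Psi$DO of order $2s$ (the non-smoothness of $|\xi|^{2s}$ at $\xi=0$ having been excised), hence pseudolocal, so $\ssupp(Eu)\cap\Omega_e \subset \ssupp(u)\cap\Omega_e = \emptyset$ since $u=\phi$ is smooth there. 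Your spatial splitting with direct differentiation of the far-field singular integral is more elementary and self-contained, at the cost of the dominated-convergence bookkeeping you flag (which is fine: the $k$-th $x$-derivative of the kernel is $O(|x-y|^{-d-2s-k})$, square-integrable in $y$ on $\{|y-x_0|\ge\rho\}$, so it pairs against $u_2\in L^2$; the identification of this smooth function with the distribution $(-\Delta)^s u_2$ near $x_0$ follows by testing against $\psi\in C_c^\infty(B(x_0,\rho/2))$ and Fubini). The paper's argument is shorter if one is willing to quote pseudolocality as a black box; yours is the natural choice if one wants to stay at the level of singular integrals, and it sidesteps exactly the symbol-regularity issue you mention at the end.
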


\begin{proof}
Let $\chi\in C^{\infty}_c(\mR^d)$ satisfy $\chi(\xi)=1$ for $\abs{\xi}\leq\frac{1}{2}$ and $\chi(\xi)=0$ for $\abs{\xi}\geq1$. Then
\begin{align*}
& (-\Delta)^{s} u = \mathcal{F}^{-1}\left(\abs{\xi}^{2s}(\mathcal{F} u)(\xi)\right)\\
& \quad = \mathcal{F}^{-1}\left((1-\chi(\xi))\abs{\xi}^{2s}(\mathcal{F} u)(\xi)\right) + \mathcal{F}^{-1}\left(\chi(\xi)\abs{\xi}^{2s}(\mathcal{F} u)(\xi)\right)\\
& \quad = Eu + Su.
\end{align*}
Since $u\in H^s(\mR^d)$ and $\chi$ has compact support we know that $\chi(\xi)\abs{\xi}^{2s}(\mathcal{F} u)(\xi)\in L^1$ with compact support. Hence $Su$ is smooth by the properties of the Fourier transform. Also $E$ is a classical elliptic pseudodifferential operator of order $2s$ since its symbol $(1-\chi(\xi))\abs{\xi}^{2s}$ is a classical elliptic symbol of order $2s$. Hence,
\begin{equation*}
\ssupp(Eu)\cap\Omega_e\subset \ssupp(u)\cap\Omega_e =  \ssupp(\phi)\cap\Omega_e,
\end{equation*}
which then concludes our lemma. 
\end{proof}

\begin{remark} In fact, it holds that $\ssupp \Lambda_f \phi \subset \ssupp \phi \cap \mD$ whenever $\phi \in H^s(\R^d)$.
\end{remark}
With \Cref{lem:qualitative} at hand, we are now able to prove the following forward estimate:

\begin{lemma}\label{lem:forward-estimate}
Let $d\in\mN$, $0<s<1$, $\Omega$ and $\mD$ be nonempty bounded smooth domains in $\mR^{d}$ with $\overline{\mD}\cap\overline{\Omega}=\emptyset$. Let $\phi\in C^{\infty}_c(\Omega_{e})$ and let $f_1, f_2 \in L^{\infty}(\Om)$ be such that $f_1, f_2 \geq0$. Then there exists a constant $C=C(d,s,\Omega,\abs{\mD},\dist(\Omega,\mD),\phi)>0$,
independent of both $f_{1}$ and $f_{2}$, such that
\begin{equation}\label{forward_estimate}
\norm{G(f_{1})-G(f_{2})}_{L^2(\mD)}\leq C\norm{f_1-f_2}_{L^2(\Om)}, 
\end{equation}
where $G$ is the forward map given in \eqref{eq:forward-map}. 
\end{lemma}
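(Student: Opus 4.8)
The plan is to compare the two solutions $u_1 = u_{f_1}$ and $u_2 = u_{f_2}$ of \eqref{eq:Dirichlet-problem} with the same exterior data $\phi$, and to estimate $(-\Delta)^s(u_1 - u_2)$ pointwise on $\mD$. Since both solutions carry the same exterior value $\phi$, the difference $w := u_1 - u_2 \in \tilde H^s(\Omega)$ solves the equation $(-\Delta)^s w + f_1 w = (f_2 - f_1)u_2$ in $\Omega$ in the weak sense; this is the basic device. The point of working with $w \in \tilde H^s(\Omega)$ is that on $\mD$ (which is disjoint from $\overline\Omega$), $(-\Delta)^s w$ is given by the absolutely convergent singular integral $(-\Delta)^s w(x) = c_{d,s}\, \mathrm{p.v.}\!\int_{\R^d} \frac{w(x) - w(y)}{|x-y|^{d+2s}}\,\rmd y = -c_{d,s}\int_{\Omega} \frac{w(y)}{|x-y|^{d+2s}}\,\rmd y$, because $w$ is supported in $\overline\Omega$ and $x \in \mD$ lies at distance at least $\dist(\Omega,\mD) > 0$ from the support. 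Hence for each $x \in \mD$,
\[
\abs{G(f_1)(x) - G(f_2)(x)} = c_{d,s}\abs{\int_{\Omega} \frac{w(y)}{|x-y|^{d+2s}}\,\rmd y} \le \frac{c_{d,s}}{\dist(\Omega,\mD)^{d+2s}} \int_{\Omega}\abs{w(y)}\,\rmd y \le \frac{c_{d,s}\,\abs{\Omega}^{1/2}}{\dist(\Omega,\mD)^{d+2s}}\,\norm{w}_{L^2(\Omega)}.
\]
Integrating over $\mD$ and using $\abs{\mD} < \infty$ gives $\norm{G(f_1) - G(f_2)}_{L^2(\mD)} \le C(d,s,\abs{\Omega},\abs{\mD},\dist(\Omega,\mD))\,\norm{w}_{L^2(\Omega)}$, so everything reduces to the interior bound $\norm{w}_{L^2(\Omega)} \lesssim \norm{f_1 - f_2}_{L^2(\Omega)}$.

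For that interior estimate I would test the weak formulation of $(-\Delta)^s w + f_1 w = (f_2 - f_1)u_2$ against $w$ itself: using $f_1 \ge 0$ and the fractional Poincaré inequality on the bounded domain $\Omega$, $\norm{w}_{L^2(\Omega)}^2 \lesssim \norm{w}_{\tilde H^s(\Omega)}^2 \lesssim [(-\Delta)^{s/2}w,(-\Delta)^{s/2}w] + (f_1 w, w) = ((f_2 - f_1)u_2, w) \le \norm{f_2 - f_1}_{L^2(\Omega)}\norm{u_2}_{L^\infty(\Omega)}\norm{w}_{L^2(\Omega)}$. Dividing through by $\norm{w}_{L^2(\Omega)}$ and invoking the maximum-principle bound \eqref{eq:L-infty-bound}, $\norm{u_2}_{L^\infty(\Omega)} \le \norm{\phi}_{L^\infty(\Omega_e)}$, yields $\norm{w}_{L^2(\Omega)} \le C(\Omega,s)\norm{\phi}_{L^\infty(\Omega_e)}\norm{f_1 - f_2}_{L^2(\Omega)}$, with constant independent of $f_1, f_2$. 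Chaining this with the previous paragraph gives \eqref{forward_estimate}.

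The one place that needs care — and the likely main obstacle — is justifying the pointwise singular-integral representation of $(-\Delta)^s w$ on $\mD$ and its interplay with the weak/strong notions of the DN map. A priori $w \in \tilde H^s(\Omega)$ only guarantees $(-\Delta)^s w \in H^{-s}(\R^d)$ as a distribution; one must argue that its restriction to the open set $\mD \subset \Omega_e$ agrees with the classical integral $-c_{d,s}\int_\Omega w(y)|x-y|^{-d-2s}\,\rmd y$, which is smooth in $x$ there. This is exactly what \Cref{lem:qualitative} (and the remark following it) provides: $(-\Delta)^s u_f$ has no singular support in $\Omega_e$, so $G(f) = (-\Delta)^s u_f|_{\mD}$ is a genuine smooth function, and differentiating under the integral sign is legitimate since the kernel $|x-y|^{-d-2s}$ is smooth and bounded for $x \in \mD$, $y \in \Omega$. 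One should also note that $(-\Delta)^s w|_{\mD}$ indeed equals $G(f_1)|_{\mD} - G(f_2)|_{\mD}$ because the operator is linear and the two exterior values cancel; this is immediate once both $(-\Delta)^s u_i$ are known to be represented by the same convergent integral formula restricted to $\mD$. A secondary technical subtlety is whether one needs $w$ itself to be representable by such an integral or only that its action against test functions supported in $\mD$ is given by the integral kernel — the latter suffices for the $L^2$ bound after a density argument, but the cleanest route is to appeal directly to \Cref{lem:qualitative} for the smoothness and then use that for $w$ supported in $\overline\Omega$ the nonlocal formula converges absolutely at every point of $\mD$.
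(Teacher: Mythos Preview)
Your proof is correct and takes a genuinely different, more elementary route than the paper's. The paper proceeds via the Alessandrini identity: it introduces an auxiliary solution $\tilde u_2$ with exterior datum $\psi=\varphi\,(-\Delta)^s(u_1-u_2)$ for a cutoff $\varphi$ near $\mD$, uses $((\Lambda_{f_1}-\Lambda_{f_2})\phi,\psi)=((f_1-f_2)u_1,\tilde u_2)_\Omega$ to bound $\norm{(-\Delta)^s(u_1-u_2)}_{L^2(\mD)}^2$ from above and below, invokes an estimate from \cite{covi2022globalinversefractionalconductivity} to control $\norm{\tilde u_2}_{L^2(\Omega)}$ by $\norm{\psi}_{L^2}$, and finishes with a limiting argument $\varphi_\eps\to\chi_{\mD}$. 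Your argument bypasses all of this: since $w=u_1-u_2\in\tilde H^s(\Omega)$ is supported away from $\mD$, the singular-integral representation of $(-\Delta)^s w$ on $\mD$ reduces the exterior $L^2$ bound to $\norm{w}_{L^2(\Omega)}$, and the latter follows from the standard energy estimate (test against $w$, use $f_1\ge 0$ and fractional Poincar\'e) together with \eqref{eq:L-infty-bound}. Your approach is self-contained with tools already present in the paper (indeed the kernel computation mirrors that of \Cref{lem:uniform-bound}) and avoids the external reference and the cutoff approximation. The paper's route, on the other hand, stays closer to the DN-map formalism and would transfer more readily to settings where an explicit off-support kernel for the operator is unavailable.
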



\begin{proof}
Let $\phi\in C^{\infty}_c(\overline{\mD})$, $u_k=u_{f_k}$ be the unique solutions to
\begin{equation*}
    ((-\Delta)^s + f_k)u_k=0\text{ in } \Om,\quad u_k=\phi\text{ in } \Om_e
\end{equation*}
and $\tilde u_2$ solve
\begin{equation*}
    ((-\Delta)^s + f_2)\tilde u_2=0\text{ in } \Om,\quad \tilde u_2=\psi\text{ in } \Om_e.
\end{equation*}
Choose $\psi = \varphi(-\Delta)^s(u_1 - u_2)$ where $\varphi$ is a cutoff such that $\varphi\equiv1$ in $\mD$ and $\varphi\geq0$ in $\tilde{\mD}$ with $\dist(\tilde{\mD},\overline{\Omega})>0$. Next we use the weak definition of the DN map and recall the Alessandrini identity (see e.g. \cite{GSU20Calderon})
\begin{equation}
    ((\Lambda_{f_1}-\Lambda_{f_2})\phi,\psi) = ((f_1-f_2)u_1|_{\Omega},\tilde u_2|_{\Omega})_{\Omega}.
\end{equation}
Now by the above \Cref{lem:qualitative} the DN map is smooth where $\phi$ is smooth and thus
\begin{equation*}
    (\Lambda_{f_k}\phi,\psi)=((-\Delta)^su_k|_{\Omega_e},\psi)_{\Omega_e} = \int_{\Omega_e}(-\Delta)^su_k|_{\Omega_e}\psi\,\rmd x
\end{equation*}
(notice that apriori the DN map acts as a distribution, but the above can be made precise by an approximation argument).
Therefore
\begin{align*}
    &\abs{((\Lambda_{f_1}-\Lambda_{f_2})\phi,\psi)} = \abs{(\Lambda_{f_1}\phi,\psi) - (\Lambda_{f_2}\phi,\psi)} =\left| \int_{\Omega_e}\psi(-\Delta)^s(u_1-u_2)\,\rmd x \right|\\
    &\quad = \left| \int_{\mD}\left((-\Delta)^s(u_1-u_2)\right)^2\,\rmd x + \int_{\supp(\varphi)\setminus\mD}\varphi\left((-\Delta)^s(u_1-u_2)\right)^2\,\rmd x \right| \\
    &\quad \ge \left| \int_{\mD}\left((-\Delta)^s(u_1-u_2)\right)^2\,\rmd x\right| = \norm{(-\Delta)^s(u_1-u_2)}_{L^2(\mD)}^2.
\end{align*}

Utilizing the Alessandrini identity we have, denoting $f=f_1-f_2$,
\begin{align*}
&\abs{((\Lambda_{f_1}-\Lambda_{f_2})\phi,\psi)} = {(fu_1|_{\Omega},\tilde u_2|_{\Omega})_{\Omega}} \\
&\quad \le \norm{f}_{L^2(\Omega)}\norm{u_1\tilde u_2}_{L^2(\Omega)} \le \norm{f}_{L^2(\Omega)}\norm{u_1}_{L^{\infty}(\Omega)}\norm{\tilde u_2}_{L^2(\Omega)}
\end{align*}
where in the first inequality we used Cauchy-Schwarz and in the second Hölder's inequality.
Let us then focus on $\norm{\tilde u_2}_{L^2(\Omega)}$. Notice that $\tilde u_2-\psi$ is supported in $\Om$, $(\tilde u_2-\psi)|_{\Omega}=\tilde u_2|_{\Omega}$ and thus
\begin{align*}
    \norm{\tilde u_2 - \psi}_{H^s(\mR^d)}\geq \norm{\tilde u_2 - \psi}_{L^2(\Omega)} = \norm{\tilde u_2}_{L^2(\Omega)}.
\end{align*}
Using \cite[Lemma 5.3]{covi2022globalinversefractionalconductivity}, we have that 
\begin{align*}
    \norm{\tilde u_2 - \psi}_{H^s(\mR^d)}\leq C(d,s,\abs{\supp(\varphi)},\Om,\tau)\norm{\psi}_{L^2(\supp(\psi))}
\end{align*}
where $\tau := \dist(\Omega,\supp(\psi))$.
Hence by using H\"{o}lder's inequality
\begin{align*}
&\norm{\tilde u_2}_{L^2(\Omega)} \le C(d,s,\abs{\supp(\varphi)},\Om,\tau)\norm{\psi}_{L^2(\supp(\psi))} \\
&\quad \le C(d,s,\abs{\supp(\varphi)},\Om,\tau)\norm{\varphi}_{L^{\infty}(\supp(\varphi))}\norm{(-\Delta)^s(u_1-u_2)}_{L^2(\supp(\varphi))}.
\end{align*}
Putting the above together we have
\begin{equation} 
\begin{aligned}
    &\norm{(-\Delta)^s(u_1-u_2)}_{L^2(\mD)}^2\\
    &\leq C(d,s,\abs{\supp(\varphi)},\Om,\tau)\norm{f}_{L^2(\Om)}\norm{u_1}_{L^{\infty}(\Om)}\norm{\varphi}_{L^{\infty}(\supp(\varphi))}\norm{(-\Delta)^s(u_1-u_2)}_{L^2(\supp(\varphi))}\\
    &\leq C(d,s,\abs{\supp(\varphi)},\Om,\tau,\phi)\norm{f}_{L^2(\Om)}\norm{\varphi}_{L^{\infty}(\supp(\varphi))}\norm{(-\Delta)^s(u_1-u_2)}_{L^2(\supp(\varphi))}
\end{aligned} \label{u_1_u_2_difference_1}
\end{equation}
where we used \eqref{eq:L-infty-bound}.

The above works for any smooth cutoff $\varphi$. Let $\Phi(x)=\chi_{B(0,1)}e^{\frac{1}{\abs{x}^2-1}}$ and $\Phi_1(x)=\frac{\Phi(x)}{\int_{\mR^d}\Phi(x)\,\rmd x}$. Let now $\e>0$ and set $\Phi_{\e}=\frac{1}{\e^d}\Phi_1\left(\frac{x}{\e}\right)$, $\varphi_{\e}=\Phi_{\e}\ast \chi_{\mD}$. Then $\norm{\varphi_{\e}}_{L^{\infty}(\supp(\varphi_{\e})}\leq 1$, $\varphi_{\e}$ is smooth,
\begin{equation*}
    \supp(\varphi_{\e})\subseteq\mD_{\e}=\{x: \dist(x,\mD)\leq\e\}.
\end{equation*}
and, by making $\e$ smaller if necessary, $\supp(\varphi_{\e})\cap\overline{\Om}=\emptyset$.
Moreover $\lim_{\e\to0}\varphi_{\e} = \chi_{\mD}$. For all $\e$ (small enough) the estimate \eqref{u_1_u_2_difference_1} holds with $\varphi$ replaced by $\varphi_{\e}$. Thus taking the limit $\e\to0$ we have
\begin{equation*}
    \norm{(-\Delta)^s(u_1-u_2)}_{L^2(\mD)}^2\leq C(d,s,\abs{\mD},\Om,\dist(\Omega,\mD),\phi)\norm{f}_{L^2(\Om)}\norm{(-\Delta)^s(u_1-u_2)}_{L^2(\mD)}
\end{equation*}
which implies (recall that $f=f_1-f_2$)
\begin{equation}\label{lemma_estimate}
    \norm{(-\Delta)^s(u_1-u_2)}_{L^2(\mD)}\leq C(d,s,\abs{\mD},\Om,\dist(\Omega,\mD),\phi)\norm{f_1-f_2}_{L^2(\Om)}.
\end{equation}
This readily gives \eqref{forward_estimate}.
\end{proof}

\begin{lemma}\label{lem:uniform-bound}
Let $d\in\mN$, $0<s<1$, $\Omega$ and $\mD$ be nonempty bounded smooth domains in $\mR^{d}$ with $\overline{\mD}\cap\overline{\Omega}=\emptyset$. Let $\phi\in C^{\infty}_c(\Omega_{e})$, $f \in L^{\infty}(\Om)$ be such that $f \geq0$ and let $u_{f}$ be the unique solution to \eqref{eq:Dirichlet-problem}. Then there exists a constant $C=C(d,s,\abs{\Omega},\dist(\Omega,\mD),\phi)>0$, independent of $f$, such that 
\begin{equation*}
\norm{G(f)}_{L^{\infty}(\mD)} \le C. 
\end{equation*}
\end{lemma}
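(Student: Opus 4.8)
The plan is to bound the pointwise value $((-\Delta)^{s}u_{f})(x)$ for $x\in\mD$ directly via the singular-integral representation of the fractional Laplacian, exploiting that $u_{f}$ coincides with the fixed smooth exterior value $\phi$ on a fixed-size neighbourhood of $\mD$ and is uniformly bounded on all of $\mR^{d}$ by the maximum principle.

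First I would record the two structural facts we need about $u_{f}$. Since $u_{f}|_{\Omega_{e}}=\phi\in C_{c}^{\infty}(\Omega_{e})$, the function $u_{f}$ vanishes outside the compact set $\overline{\Omega}\cup\supp\phi$; combined with the bound \eqref{eq:L-infty-bound} from the maximum principle this gives $\norm{u_{f}}_{L^{\infty}(\mR^{d})}\le\norm{\phi}_{L^{\infty}(\Omega_{e})}$, independent of $f$. Next, set $\delta:=\dist(\overline{\Omega},\overline{\mD})=\dist(\Omega,\mD)>0$ and fix $x\in\mD$; then $u_{f}$ agrees with $\phi\in C_{c}^{\infty}$ on the ball $B(x,\delta)$, so $(-\Delta)^{s}u_{f}$ is smooth near $x$ (as in \Cref{lem:qualitative}) and is there represented by the (principal-value) convergent integral
\[
((-\Delta)^{s}u_{f})(x) = c_{d,s}\,\mathrm{p.v.}\!\int_{\mR^{d}}\frac{u_{f}(x)-u_{f}(y)}{\abs{x-y}^{d+2s}}\,\rmd y .
\]
The justification of this identity is the one delicate point: $u_{f}$ is a priori only in $H^{s}(\mR^{d})$, hence possibly not globally continuous, so one argues by localization — write $u_{f}=\eta u_{f}+(1-\eta)u_{f}$ with $\eta\in C_{c}^{\infty}(B(x,\delta))$ equal to $1$ near $x$; then $\eta u_{f}=\eta\phi\in C_{c}^{\infty}(\mR^{d})$ is handled by the classical formula, while $(1-\eta)u_{f}\in L^{\infty}(\mR^{d})$ has compact support and vanishes near $x$, so $(-\Delta)^{s}((1-\eta)u_{f})$ is smooth near $x$ and given by $-c_{d,s}\int \abs{x-y}^{-d-2s}((1-\eta)u_{f})(y)\,\rmd y$, an absolutely convergent integral since the kernel is bounded on $\supp((1-\eta)u_{f})$ — exactly the smoothing-off-the-support mechanism already used in the proof of \Cref{lem:qualitative}. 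Adding the two pieces recombines into the displayed principal-value integral.

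Then I would split the integral at radius $\delta$. On $B(x,\delta)$ we have $u_{f}=\phi$, so after discarding the first-order Taylor term by the symmetry of the ball the integrand is controlled by $\tfrac12\norm{D^{2}\phi}_{L^{\infty}}\abs{x-y}^{2-d-2s}$, which is integrable at $x$ and contributes at most $C(d,s)\norm{D^{2}\phi}_{L^{\infty}}\delta^{2-2s}$. On $\mR^{d}\setminus B(x,\delta)$ there is no singularity, $\abs{u_{f}(x)-u_{f}(y)}\le 2\norm{u_{f}}_{L^{\infty}(\mR^{d})}\le 2\norm{\phi}_{L^{\infty}(\Omega_{e})}$, and $\int_{\abs{z}\ge\delta}\abs{z}^{-d-2s}\,\rmd z = C(d,s)\delta^{-2s}$, so this part contributes at most $C(d,s)\norm{\phi}_{L^{\infty}(\Omega_{e})}\delta^{-2s}$. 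Adding the two estimates bounds $\abs{((-\Delta)^{s}u_{f})(x)}$ by a constant depending only on $d$, $s$, $\dist(\Omega,\mD)$ and $\phi$ (one may instead bound the far part crudely using $\norm{\phi}_{L^{\infty}(\Omega_{e})}(\abs{\Omega}+\abs{\supp\phi})\delta^{-d-2s}$, which is where the dependence on $\abs{\Omega}$ in the statement enters), uniformly in $x\in\mD$ and independently of $f$; taking the supremum over $x\in\mD$ yields $\norm{G(f)}_{L^{\infty}(\mD)}\le C$.

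The main obstacle is thus not any of the size estimates, which are elementary, but the rigorous passage from the distributional $(-\Delta)^{s}u_{f}$ to its pointwise singular-integral value on $\mD$; once the localization argument above is in place the rest is a direct computation. An alternative that sidesteps the singular integral is to use the splitting $(-\Delta)^{s}u_{f}=E u_{f}+S u_{f}$ from the proof of \Cref{lem:qualitative} and estimate $Eu_f$ near $\mD$ through the Schwartz kernel of the pseudodifferential operator $E$ together with the $L^{1}$-bound on $\chi(\xi)\abs{\xi}^{2s}(\mathcal{F}u_{f})(\xi)$, but keeping track of the uniformity in $f$ of those bounds amounts to essentially the same work.
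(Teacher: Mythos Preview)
Your proof is correct and rests on the same two ingredients as the paper's: the pointwise singular-integral representation of $(-\Delta)^{s}u_{f}$ on $\mD$, and the $f$-independent bound $\norm{u_{f}}_{L^{\infty}}\le\norm{\phi}_{L^{\infty}(\Omega_{e})}$ from the maximum principle. The only genuine difference is the splitting. You cut at the ball $B(x,\delta)$ and handle the near part by a second-order Taylor expansion of $\phi$; the paper instead cuts along $\Omega$ versus $\Omega_{e}$, uses $u_{f}|_{\Omega_{e}}=\phi$ to rewrite the $\Omega_{e}$-integral, and then adds and subtracts the $\Omega$-piece so that the full integral $(-\Delta)^{s}\phi(x)$ emerges as one of the terms. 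The paper's route is a touch slicker in that it avoids any Taylor argument and produces the explicit decomposition
\[
(-\Delta)^{s}u_{f}(x)=C_{d,s}\int_{\Omega}\frac{\phi(x)-u_{f}(y)}{|x-y|^{d+2s}}\,\rmd y+(-\Delta)^{s}\phi(x)-C_{d,s}\,\phi(x)\int_{\Omega}\frac{\rmd y}{|x-y|^{d+2s}},
\]
whose three terms are each trivially bounded; this is also where the dependence on $|\Omega|$ naturally appears. Your approach, on the other hand, is more self-contained and makes the role of $\dist(\Omega,\mD)$ transparent through the $\delta^{2-2s}$ and $\delta^{-2s}$ factors. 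You also give a more careful justification, via the cut-off $\eta$, of why the distributional fractional Laplacian agrees with the principal-value integral at points of $\mD$; the paper simply writes down the formula, implicitly relying on \Cref{lem:qualitative}.
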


\begin{proof}
Since $\phi=0$ in $\Omega$, for each $x\in\mD$, we see that 
\begin{equation*}
\begin{aligned} 
& (-\Delta)^{s}u_{f}(x) = C_{d,s}\int_{\Omega}\frac{u_{f}(x)-u_{f}(y)}{\abs{x-y}^{d+2s}}\,\rmd y + C_{d,s}\int_{\Omega_{e}}\frac{u_{f}(x)-u_{f}(y)}{\abs{x-y}^{d+2s}}\,\rmd y \\ 
& \quad = C_{d,s}\int_{\Omega}\frac{\phi(x)-u_{f}(y)}{\abs{x-y}^{d+2s}}\,\rmd y + C_{d,s}\int_{\Omega_{e}}\frac{\phi(x)-\phi(y)}{\abs{x-y}^{d+2s}}\,\rmd y \\ 
& \quad = C_{d,s}\int_{\Omega}\frac{\phi(x)-u_{f}(y)}{\abs{x-y}^{d+2s}}\,\rmd y + C_{d,s}\int_{\mR^{d}}\frac{\phi(x)-\phi(y)}{\abs{x-y}^{d+2s}}\,\rmd y - C_{d,s}\int_{\Om}\frac{\phi(x)-\phi(y)}{\abs{x-y}^{d+2s}}\,\rmd y \\ 
& \quad = C_{d,s}\int_{\Omega}\frac{\phi(x)-u_{f}(y)}{\abs{x-y}^{d+2s}}\,\rmd y + (-\Delta)^{s}\phi(x) - C_{d,s}\phi(x)\int_{\Om}\frac{1}{\abs{x-y}^{d+2s}}\,\rmd y, 
\end{aligned} 
\end{equation*}
where 
\begin{equation*}
C_{d,s} := \frac{4^{s}\Gamma(d/2+s)}{\pi^{d/2}\abs{\Gamma(-s)}}. 
\end{equation*}
Since for each $x\in\mD$ one can estimate 
\begin{equation*}
\begin{aligned}
& \left| \int_{\Omega}\frac{\phi(x)-u_{f}(y)}{\abs{x-y}^{d+2s}}\,\rmd y \right| \le \frac{1}{\dist(\Omega,\mD)^{d+2s}} \int_{\Omega} \abs{\phi(x)-u_{f}(y)}\,\rmd y \\ 
& \quad \le \frac{\abs{\Omega}}{\dist(\Omega,\mD)^{d+2s}} (\phi(x) + \norm{u_{f}}_{L^{\infty}(\Omega)}) \\ 
& \quad \le \frac{2\abs{\Omega}}{\dist(\Omega,\mD)^{d+2s}} \norm{\phi}_{L^{\infty}(\Omega_{e})}
\end{aligned}
\end{equation*}
where we used \eqref{eq:L-infty-bound} in the last inequality. Similarly for the last term  we have 
\begin{equation*}
    \left|\phi(x)\int_{\Om}\frac{1}{\abs{x-y}^{d+2s}}\,\rmd y\right|\leq \frac{\abs{\Om}}{\dist(\Omega,\mD)^{d+2s}}\norm{\phi}_{L^{\infty}(\Om_e)}.
\end{equation*}
Therefore we combine all the equations above to obtain 
\begin{equation*}
\norm{(-\Delta)^{s}u_{f}}_{L^{\infty}(\mD)} \le C(d,s,\abs{\Omega},\dist(\Omega,\mD))\left( \norm{\phi}_{L^{\infty}(\Omega_{e})} + \norm{(-\Delta)^{s}\phi}_{L^{\infty}(\mD)} \right), 
\end{equation*}
which concludes our lemma. 
\end{proof}

\subsection{Stability estimates for the inverse problem}

To suit our purposes, we need to refine the estimate in \cite[Theorem~1]{Rueland2021SingleMeasurement}, making explicit the expressions for $\norm{f_{1}}_{C^{s}(\Omega)}$ and $\norm{f_{2}}_{C^{s}(\Omega)}$.
We first state the following lemma, which is a special case of \cite[Proposition~3.3]{Rueland2021SingleMeasurement} (which can be proved using \cite[Proposition~6.1]{GRSU20Reconstruction}): 

\begin{lemma}\label{lem:inverse-estimate1}
Let $d\in\mN$, let $\Omega$ and $\mD$ be nonempty bounded smooth domains in $\mR^{d}$ with $\overline{\mD}\cap\overline{\Omega}=\emptyset$, let $\phi\in C^{\infty}_c(\Omega_{e})$, let $0<s<1$, let $M_{0}>0$ and let $f_1, f_2 \in L^{\infty}(\Om)$ be such that $f_1, f_2 \geq0$ and 
\begin{equation*}
\norm{f_{k}}_{L^{\infty}(\Omega)} \le M_{0} \quad \text{for all $k=1,2$.}
\end{equation*}
Then there exist constants $\tilde{C}_{\rm stab},\tilde{E},\tilde{\mu}>0$ (depending only on $\Omega,\mD,d,s,M_{0},\phi$) such that 
\begin{equation*}
\begin{aligned} 
&\norm{(-\Delta)^{s}(u_{f_{1}}-u_{f_{2}})}_{H^{-s}(\Omega)}+\norm{u_{f_{1}}-u_{f_{2}}}_{H_{\overline{\Omega}}^{s}} \\
& \quad \le \tilde{C}_{\rm stab}^{2}\left|\log\left(\tilde{E}\norm{G(f_{1})-G(f_{2})}_{H^{-s}(\mD)}^{-1}\right)\right|^{-2\tilde{\mu}}. 
\end{aligned} 
\end{equation*}
\end{lemma}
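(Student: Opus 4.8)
The plan is to reduce the asserted estimate to the quantitative unique continuation principle for $(-\Delta)^{s}$ recorded in \cite[Proposition~6.1]{GRSU20Reconstruction}, which is exactly the engine behind \cite[Proposition~3.3]{Rueland2021SingleMeasurement}; the statement above is the special case in which both potentials are nonnegative and bounded by $M_{0}$, so essentially only a careful specialization is required.

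First I would observe that both terms on the left-hand side are dominated by the single quantity $\norm{w}_{H^{s}(\mR^{d})}$, where $w:=u_{f_{1}}-u_{f_{2}}$. Since $u_{f_{1}}|_{\Omega_{e}}=u_{f_{2}}|_{\Omega_{e}}=\phi$, the difference $w$ vanishes on $\Omega_{e}$, so $\supp(w)\subseteq\overline{\Omega}$ and $\norm{w}_{H^{s}_{\overline{\Omega}}}=\norm{w}_{H^{s}(\mR^{d})}$; moreover, by Plancherel and $\abs{\xi}^{4s}\le(1+\abs{\xi}^{2})^{2s}$ one has $\norm{(-\Delta)^{s}w}_{H^{-s}(\Omega)}\le\norm{(-\Delta)^{s}w}_{H^{-s}(\mR^{d})}\le\norm{w}_{H^{s}(\mR^{d})}$. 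Hence it suffices to bound $\norm{w}_{H^{s}(\mR^{d})}$ by the right-hand side.

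Next I would record the two inputs making the quantitative unique continuation estimate applicable to $w$. On the qualitative side, $w=0$ in the open set $\mD\subset\Omega_{e}$, while $(-\Delta)^{s}w\in H^{-s}(\mR^{d})$ with $(-\Delta)^{s}w|_{\mD}=G(f_{1})-G(f_{2})$. On the quantitative side, there is an a priori bound $\norm{w}_{H^{s}(\mR^{d})}\le E_{0}$ with $E_{0}=E_{0}(d,s,\Omega,\phi)$ independent of $f_{1},f_{2}$: writing $u_{f_{k}}=\phi+v_{k}$ with $v_{k}\in\tilde{H}^{s}(\Omega)$, the form $a_{f_{k}}(v,\varphi):=\langle(-\Delta)^{s/2}v,(-\Delta)^{s/2}\varphi\rangle+\langle f_{k}v,\varphi\rangle$ is coercive on $\tilde{H}^{s}(\Omega)$ uniformly in $f_{k}\ge0$ by the fractional Poincar\'e inequality, and the source $-a_{f_{k}}(\phi,\varphi)=-\langle(-\Delta)^{s/2}\phi,(-\Delta)^{s/2}\varphi\rangle$ is independent of $f_{k}$ since $\phi\equiv0$ on $\Omega$ forces $\langle f_{k}\phi,\varphi\rangle=0$; testing with $\varphi=v_{k}$ gives $\norm{v_{k}}_{H^{s}(\mR^{d})}\lesssim\norm{\phi}_{H^{s}(\mR^{d})}$, hence $\norm{u_{f_{k}}}_{H^{s}(\mR^{d})}\lesssim\norm{\phi}_{H^{s}(\mR^{d})}$ and $\norm{w}_{H^{s}(\mR^{d})}\lesssim\norm{\phi}_{H^{s}(\mR^{d})}=:E_{0}$.

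Finally I would apply the quantitative unique continuation estimate of \cite[Proposition~6.1]{GRSU20Reconstruction} (equivalently, the corresponding step in the proof of \cite[Proposition~3.3]{Rueland2021SingleMeasurement}) to $w$: for a function in $H^{s}(\mR^{d})$ with $H^{s}(\mR^{d})$-norm at most $E_{0}$ that vanishes on $\mD$, its $H^{s}$-mass on the fixed compact set $\overline{\Omega}$ — which, since $\supp(w)\subseteq\overline{\Omega}$, is all of $\norm{w}_{H^{s}(\mR^{d})}$ — is controlled by a logarithmic modulus of $\norm{(-\Delta)^{s}w}_{H^{-s}(\mD)}$, with constants $\tilde{C}_{\rm stab},\tilde{E},\tilde{\mu}>0$ depending only on $\Omega,\mD,d,s,E_{0}$, hence only on $\Omega,\mD,d,s,\phi$. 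This yields
\[
\norm{w}_{H^{s}(\mR^{d})}\le\tilde{C}_{\rm stab}^{2}\left|\log\left(\tilde{E}\norm{G(f_{1})-G(f_{2})}_{H^{-s}(\mD)}^{-1}\right)\right|^{-2\tilde{\mu}},
\]
and combining with the first step concludes the proof. The substantive content is entirely in the cited quantitative unique continuation result — a Carleman/propagation-of-smallness argument through the Caffarelli--Silvestre extension — and the only care needed in assembling it is: (a) verifying that the a priori bound $E_{0}$ is genuinely uniform over $\{0\le f_{k}\le M_{0}\}$ (it is; it does not even use $M_{0}$, so allowing the constants to depend on $M_{0}$ is a harmless relaxation), and (b) matching the norms in the cited statement with the two left-hand terms, which is precisely the reduction carried out in the second step.
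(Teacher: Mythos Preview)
Your proposal is correct and follows exactly the route the paper indicates: the paper does not give a detailed proof but states the lemma as a special case of \cite[Proposition~3.3]{Rueland2021SingleMeasurement}, proved via \cite[Proposition~6.1]{GRSU20Reconstruction}, and your argument simply spells out this specialization (reduction to $\norm{w}_{H^{s}(\mR^{d})}$, the uniform a priori bound from coercivity, then the quantitative unique continuation estimate). Your observation that the constant $E_{0}$ is in fact independent of $M_{0}$ is a nice sharpening beyond what the lemma claims.
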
 

Let us recall \cite[Lemma~5.1]{Rueland2021SingleMeasurement} (which is a slight variant of the weighted boundary interpolation result from \cite{ASV13FarFieldStable}): 

\begin{lemma}\label{lem:interpolation}
Let $d\in\mN$, let $\Omega$ and $\mO$ be nonempty bounded smooth domains in $\mR^{d}$ with $\overline{\mO}\subset\Omega$. Let $r_{0}:=\frac{1}{2}\dist(\mO,\Omega_{e})$. Let $w\in L^{2}(\Omega)$ be such that for each $x_{0}\in\mO$ and $r\in(0,r_{0})$ we have 
\begin{equation}
\norm{w}_{L^{2}(B_{r})(x_{0})} \ge C_{\rm low}r^{b} \label{eq:polybound-u}
\end{equation}
for some constants $C_{\rm low}>0$ and $b>0$. Let $f\in C^{s}(\overline{\Omega})$ for some $0<s<1$. Assume that there exist constants $C_{\rm stab}>0$, $\tilde{\mu}>0$ such that  
\begin{equation}
\norm{fw}_{L^{2}(\mO)} \le C_{\rm stab}E \left| \log(E\epsilon^{-1}) \right|^{-\tilde{\mu}}, \label{eq:interpolation1}
\end{equation}
for some $E>0$. Then there exists a constant $\tilde{C}=\tilde{C}(\Omega,\mO,C_{\rm loc},C_{\rm stab},d,b,s)>0$ such that  
\begin{equation*}
\norm{f}_{L^{\infty}(\mO)} \le \tilde{C}E^{\frac{s}{s+b}} \norm{f}_{C^{s}(\Omega)}^{\frac{s}{s+b}} \left| \log(E\epsilon^{-1}) \right|^{-\frac{s}{s+b}\tilde{\mu}}
\end{equation*}
for all $\epsilon\in(0,1/2)$. 
\end{lemma}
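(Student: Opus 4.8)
The plan is to run the standard weighted boundary interpolation argument of \cite[Lemma~5.1]{Rueland2021SingleMeasurement} (itself a slight variant of the result of \cite{ASV13FarFieldStable}), keeping explicit track of the dependence on $\norm{f}_{C^s(\Omega)}$. First I would pass from the $L^\infty$-bound to a pointwise statement: since $f$ is continuous on the compact set $\overline{\mO}$, pick $x^*\in\overline{\mO}$ with $\abs{f(x^*)}=\norm{f}_{L^\infty(\mO)}=:m$, so the goal is to bound $m$; we may assume $m>0$, otherwise there is nothing to prove. Hölder continuity localizes the largeness of $\abs f$ around $x^*$: for $x\in B_\rho(x^*)$ one has $\abs{f(x)}\ge m-[f]_{C^s(\overline\Omega)}\rho^s$, so $\abs f\ge m/2$ throughout $B_\rho(x^*)$ as soon as $\rho\le\rho_0:=\bigl(m/(2[f]_{C^s(\overline\Omega)})\bigr)^{1/s}$, with the convention $\rho_0:=+\infty$ when $[f]_{C^s(\overline\Omega)}=0$.

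The step I expect to require the most care is geometric, because $x^*$ may lie on $\partial\mO$, in which case $B_\rho(x^*)\not\subseteq\mO$ and \eqref{eq:polybound-u} cannot be used at $x^*$ directly. Here I would invoke that $\mO$, being a smooth domain, satisfies a uniform interior cone (corkscrew) condition: there is $c=c(\mO)\in(0,1)$ so that for all sufficiently small $\rho$ one can find $y=y(x^*,\rho)\in\mO$ with $B_{c\rho}(y)\subseteq B_\rho(x^*)\cap\mO$. Choosing $\rho$ small enough that also $c\rho<r_0$, hypothesis \eqref{eq:polybound-u} applies at $x_0=y$ and yields $\norm{w}_{L^2(B_{c\rho}(y))}\ge C_{\mathrm{low}}(c\rho)^b$; since $B_{c\rho}(y)\subseteq B_\rho(x^*)$, where $\abs f\ge m/2$, we obtain
\begin{equation*}
\norm{fw}_{L^2(\mO)}\ \ge\ \norm{fw}_{L^2(B_{c\rho}(y))}\ \ge\ \frac{m}{2}\,\norm{w}_{L^2(B_{c\rho}(y))}\ \ge\ \frac{m}{2}\,C_{\mathrm{low}}\,c^{b}\,\rho^{b}
\end{equation*}
for every $\rho\le\rho_*:=\min\{\rho_0,\bar\rho\}$, where $\bar\rho=\bar\rho(\Omega,\mO,d,s)>0$ absorbs the constraints $c\rho<r_0$ and ``$\rho$ below the corkscrew threshold''. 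Combining this with the hypothesis \eqref{eq:interpolation1} gives, for all such $\rho$,
\begin{equation*}
\frac{m}{2}\,C_{\mathrm{low}}\,c^{b}\,\rho^{b}\ \le\ C_{\mathrm{stab}}\,E\,\bigl|\log(E\epsilon^{-1})\bigr|^{-\tilde\mu}.
\end{equation*}

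Finally I would optimize in $\rho$ by taking $\rho=\rho_*$. If $\rho_*=\rho_0$ (in particular $[f]_{C^s(\overline\Omega)}>0$), then substituting $\rho_0^{\,b}=\bigl(m/(2[f]_{C^s(\overline\Omega)})\bigr)^{b/s}$ and rearranging gives $m^{(s+b)/s}\lesssim [f]_{C^s(\overline\Omega)}^{\,b/s}\,E\,\bigl|\log(E\epsilon^{-1})\bigr|^{-\tilde\mu}$, and extracting the corresponding power together with $[f]_{C^s(\overline\Omega)}\le\norm{f}_{C^s(\Omega)}$ yields the bound of \Cref{lem:interpolation}, with a constant depending only on $\Omega,\mO,C_{\mathrm{low}},C_{\mathrm{stab}},d,b,s$. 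If instead $\rho_*=\bar\rho<\rho_0$ (so $m$ is comparatively large, and $\abs f\ge m/2$ still holds on $B_{c\bar\rho}(y)$), the displayed chain with $\rho=\bar\rho$ gives the crude bound $m\lesssim E\,\bigl|\log(E\epsilon^{-1})\bigr|^{-\tilde\mu}$, and interpolating this against the trivial bound $m=\norm{f}_{L^\infty(\mO)}\le\norm{f}_{C^s(\Omega)}$ via $m=m^{s/(s+b)}m^{b/(s+b)}$ recovers the same estimate. The only genuine subtlety, as indicated, is the boundary geometry: one must make the interior-ball construction uniform in $x^*\in\overline{\mO}$ and keep the several smallness requirements on $\rho$ mutually compatible, and one must treat the ``$m$ large'' regime separately so that the final constant depends only on the listed quantities and not on $f$; these are exactly the points already settled in \cite[Lemma~5.1]{Rueland2021SingleMeasurement}, so in practice I would cite their geometric lemma and carry out only the optimization above.
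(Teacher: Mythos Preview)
The paper does not actually prove this lemma: it merely recalls it verbatim from \cite[Lemma~5.1]{Rueland2021SingleMeasurement}. Your sketch is precisely the standard argument of that reference (and of \cite{ASV13FarFieldStable}) and is correct. One small point worth flagging: carrying your optimization through, the exponent you obtain on $\norm{f}_{C^{s}(\Omega)}$ is $b/(s+b)$, not the $s/(s+b)$ printed in the statement --- this appears to be a typo in the paper's transcription of the lemma rather than a flaw in your reasoning, and it is harmless for the downstream application in \Cref{prop:inverse-estimate}, where only some polynomial dependence on $\norm{f}_{C^{s}}$ is needed.
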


From \cite[Remark~1 in page~143]{Tri83FunctionSpace} we also have the inequalities  
\begin{equation}
\norm{fg}_{H^{\alpha}(\Om)} \le C \norm{f}_{C^{\alpha}(\Om)}\norm{g}_{H^{\alpha}(\Om)} ,\quad \norm{fg}_{C^{\alpha}(\Om)} \le C \norm{f}_{C^{\alpha}(\Om)}\norm{g}_{C^{\alpha}(\Om)} \quad \text{for any $\alpha\ge 0$}, \label{eq:algebra}
\end{equation}
which can also be found in \cite[(11)]{nickl2020bernstein}. As an immediate consequence, we see that 
\begin{equation}
\begin{aligned} 
& \norm{fg}_{H^{-\alpha}(\Om)} = \sup_{\norm{\phi}_{H^{\alpha}(\Om)}=1} \int fg\phi \le \norm{f}_{H^{-\alpha}(\Om)} \sup_{\norm{\phi}_{H^{\alpha}(\Om)}=1} \norm{g\phi}_{H^{\alpha}(\Om)} \\ 
& \quad \overset{\eqref{eq:algebra}}{\le} \norm{f}_{H^{-\alpha}(\Om)} \sup_{\norm{\phi}_{H^{\alpha}(\Om)}=1} \norm{g}_{C^{\alpha}(\Om)} \norm{\phi}_{H^{\alpha}(\Om)} = \norm{f}_{H^{-\alpha}(\Om)}\norm{g}_{C^{\alpha}(\Om)} \quad \text{for any $\alpha\ge 0$.}
\end{aligned} \label{eq:algebra-dual}  
\end{equation}
We are now ready to prove the main result of this section: 

\begin{proposition}\label{prop:inverse-estimate}
Let $d\in\mN$, let $\Omega$, $\mO$ and $\mD$ be nonempty bounded smooth domains in $\mR^{d}$ with $\overline{\mO}\subset\Omega$ and $\overline{\Omega}\cap\overline{\mD}=\emptyset$. Let $f_{1},f_{2}\in C^{s}(\Omega)$ with 
\begin{equation}
\norm{f_{1}}_{L^{\infty}(\Omega)}\vee\norm{f_{2}}_{L^{\infty}(\Omega)} \le M_{0} \label{eq:apriori}
\end{equation}
for some constant $M_{0}>0$. Then there exist constants $C=C(\Omega,\mD,\mO,d,s,M_{0},\phi)>0$ and $\gamma=\gamma(\Omega,\mO,d,s,M_{0},\phi)>0$ such that 
\begin{equation}
\norm{f_{1}-f_{2}}_{L^{\infty}(\mO)} \le C\left(1+\norm{f_{1}}_{C^{s}(\Omega)}^{\gamma} \vee \norm{f_{2}}_{C^{s}(\Omega)}^{\gamma}\right) \omega\left(\norm{G(f_{1})-G(f_{2})}_{H^{-s}(\mD)}\right),  \label{eq:stability-quantify1}
\end{equation}
where 
\begin{equation*}
\omega(t) = \left\{\begin{aligned}
& 0 && \text{if $t=0$} \\ 
& |\log(\tilde{E}t^{-1})|^{-\mu} && \text{if $0<t<\tilde{E}/2$,}  \\
& (\log 2\tilde{E}^{2})^{-\mu} && \text{if $t\ge \tilde{E}/2$.}
\end{aligned}\right.
\end{equation*}
for some $\mu=\mu(\Omega,\mD,\mO,d,s,M_{0},\phi)>0$ and $\tilde{E}=\tilde{E}(\Omega,\mD,d,s,M_{0},\phi)>0$. It is important to note that the logarithmic modulus of continuity $\omega$ is independent of both $f_1$ and $f_2$. 
\end{proposition}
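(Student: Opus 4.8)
The plan is to combine the qualitative logarithmic stability from \Cref{lem:inverse-estimate1} with the weighted interpolation inequality of \Cref{lem:interpolation}, tracking carefully the dependence on the $C^s$-norms of $f_1$ and $f_2$. First I would set $f := f_1 - f_2$, $w := u_{f_1}$, and rewrite the right-hand side of \Cref{lem:inverse-estimate1} in the form $C_{\mathrm{stab}} E|\log(E\epsilon^{-1})|^{-\tilde\mu}$ with $\epsilon := \norm{G(f_1)-G(f_2)}_{H^{-s}(\mD)}$, which is precisely the hypothesis \eqref{eq:interpolation1} of \Cref{lem:interpolation} once we bound $\norm{fw}_{L^2(\mO)}$ by $\norm{(-\Delta)^s(u_{f_1}-u_{f_2})}_{H^{-s}(\Omega)}+\norm{u_{f_1}-u_{f_2}}_{H^s_{\overline\Omega}}$ up to constants.

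The link between $\norm{fw}_{L^2(\mO)}$ and the quantity controlled in \Cref{lem:inverse-estimate1} is where the $C^s$-dependence enters. Since $u_{f_1}$ solves $(-\Delta)^s u_{f_1} = -f_1 u_{f_1}$ in $\Omega$, we have the identity $f_1 u_{f_1} = -(-\Delta)^s u_{f_1}$, so $f\, u_{f_1} = f_1 u_{f_1} - f_2 u_{f_1} = -(-\Delta)^s u_{f_1} - f_2 u_{f_1}$; using the equation for $u_{f_2}$ as well, one can write $f\,u_{f_1} = -(-\Delta)^s(u_{f_1}-u_{f_2}) - f_2(u_{f_1}-u_{f_2})$ on $\Omega$. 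Taking $L^2(\mO)\hookrightarrow H^{-s}(\mO)$ norms, the first term is bounded by $\norm{(-\Delta)^s(u_{f_1}-u_{f_2})}_{H^{-s}(\Omega)}$, and for the second I would apply the multiplication estimate \eqref{eq:algebra-dual} to get $\norm{f_2(u_{f_1}-u_{f_2})}_{H^{-s}(\mO)} \le \norm{u_{f_1}-u_{f_2}}_{H^{-s}(\mO)}\norm{f_2}_{C^s(\Omega)} \lesssim (1+\norm{f_2}_{C^s(\Omega)})\norm{u_{f_1}-u_{f_2}}_{H^s_{\overline\Omega}}$. Hence $\norm{f\,u_{f_1}}_{L^2(\mO)}$ — or rather its $H^{-s}$-surrogate, which still needs an $L^2$ lower bound interpolation-compatible version, so some care with whether \Cref{lem:interpolation} truly needs the $L^2$ norm or an $H^{-s}$ bound suffices — is controlled by $(1+\norm{f_2}_{C^s(\Omega)})$ times the logarithmic expression of \Cref{lem:inverse-estimate1}. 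This absorbs into the constant $C_{\mathrm{stab}}$ of \Cref{lem:interpolation} a factor $(1+\norm{f_2}_{C^s(\Omega)})$, and after raising to the power $\frac{s}{s+b}$ in the conclusion of that lemma this becomes $(1+\norm{f_2}_{C^s(\Omega)})^{s/(s+b)}$.

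Next I would verify the polynomial non-degeneracy hypothesis \eqref{eq:polybound-u}: the solution $w = u_{f_1}$ with nontrivial exterior datum $\phi\not\equiv 0$ cannot vanish to infinite order at any interior point by the strong unique continuation / antilocality property \eqref{eq:antilocality} (this is exactly the kind of lower bound established in \cite{GRSU20Reconstruction,Rueland2021SingleMeasurement}), and the constants $C_{\mathrm{low}}, b$ in that bound depend only on $\Omega, \mO, d, s, M_0, \phi$ — crucially not on $f_2$ — by the a priori bound \eqref{eq:apriori}. Feeding this and the previous step into \Cref{lem:interpolation} with $f$ replaced by $f_1-f_2$ (noting $\norm{f_1-f_2}_{C^s(\Omega)} \le \norm{f_1}_{C^s(\Omega)}+\norm{f_2}_{C^s(\Omega)}$) yields
\[
\norm{f_1-f_2}_{L^\infty(\mO)} \lesssim E^{\frac{s}{s+b}}\,(1+\norm{f_1}_{C^s(\Omega)}+\norm{f_2}_{C^s(\Omega)})^{\frac{2s}{s+b}}\,|\log(E\epsilon^{-1})|^{-\frac{s}{s+b}\tilde\mu},
\]
and setting $\gamma := \frac{2s}{s+b}$, $\mu := \frac{s}{s+b}\tilde\mu$, $\tilde E := E$ gives \eqref{eq:stability-quantify1}, after the trivial bookkeeping of defining $\omega$ piecewise to handle the regimes $t=0$, $0<t<\tilde E/2$, and $t\ge \tilde E/2$ (monotonicity of $|\log(\tilde E t^{-1})|^{-\mu}$ handles the last clamp). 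The main obstacle I anticipate is the bookkeeping in the first step: making sure that \Cref{lem:interpolation} is applied with the correct norm — it is stated with $\norm{fw}_{L^2(\mO)}$, yet \Cref{lem:inverse-estimate1} only delivers an $H^{-s}(\Omega)$ bound on $(-\Delta)^s(u_{f_1}-u_{f_2})$, so I would need to either (a) upgrade to an $L^2$ bound via elliptic regularity for the fractional equation (losing derivatives, hence re-examining whether the $H^s_{\overline\Omega}$ term already gives enough), or (b) observe that the proof of \Cref{lem:interpolation} goes through verbatim with $\norm{fw}_{H^{-s}(\mO)}$ in place of $\norm{fw}_{L^2(\mO)}$, which is the cleaner route and is presumably what the original argument in \cite{Rueland2021SingleMeasurement} does. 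Getting the dependence of all constants genuinely independent of $f_1, f_2$ (only through $M_0$) at every invocation is the other point requiring vigilance.
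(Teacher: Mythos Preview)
Your overall architecture matches the paper's: feed the identity $u_{f_1}(f_1-f_2) = -(-\Delta)^s(u_{f_1}-u_{f_2}) - f_2(u_{f_1}-u_{f_2})$ together with \Cref{lem:inverse-estimate1} into \Cref{lem:interpolation}, citing \cite{Rueland2021SingleMeasurement} for the polynomial lower bound \eqref{eq:polybound-u} on $w=u_{f_1}$, and handle the large-$t$ regime of $\omega$ by the trivial bound $\|f_1-f_2\|_{L^\infty}\le 2M_0$. You also correctly flag the sticking point: \Cref{lem:interpolation} needs $\|fw\|_{L^2(\mO)}$, while \Cref{lem:inverse-estimate1} only controls $(-\Delta)^s(u_{f_1}-u_{f_2})$ in $H^{-s}(\Omega)$.

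Where you are stuck is exactly where the paper supplies the missing idea, and it is neither of your proposed routes. Route (b) is unsafe: the proof of the weighted interpolation lemma localizes to balls $B_r(x_0)$ and compares against $\|w\|_{L^2(B_r)}\ge C_{\mathrm{low}}r^b$, which does not mesh with a nonlocal $H^{-s}$ norm on $fw$. Route (a) would require extra regularity theory. The paper instead interpolates \emph{on the product $fw$ itself}:
\[
\|u_{f_1}(f_1-f_2)\|_{L^2(\Omega)} \le C(\Omega,s)\,\|u_{f_1}(f_1-f_2)\|_{H^s_{\overline\Omega}}^{1/2}\,\|u_{f_1}(f_1-f_2)\|_{H^{-s}(\Omega)}^{1/2}.
\]
The $H^{-s}$ factor is bounded exactly via your identity, \eqref{eq:algebra-dual}, and \Cref{lem:inverse-estimate1}, picking up $(1+\|f_2\|_{C^s})$ and the factor $|\log(\tilde E\,\|G(f_1)-G(f_2)\|^{-1})|^{-2\tilde\mu}$. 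The $H^s$ factor is bounded using \eqref{eq:algebra} and the a priori bound $\|u_{f_1}\|_{H^s}\le C(M_0,\phi)$, contributing $\|f_1\|_{C^s}\vee\|f_2\|_{C^s}$ but \emph{no} logarithmic decay. The geometric mean then verifies \eqref{eq:interpolation1} in genuine $L^2$, with $E=1+\|f_1\|_{C^s}\vee\|f_2\|_{C^s}$. A related correction: you absorb the $C^s$-dependence into $C_{\mathrm{stab}}$ and then claim the output scales like $C_{\mathrm{stab}}^{s/(s+b)}$, but \Cref{lem:interpolation} does not specify how $\tilde C$ depends on $C_{\mathrm{stab}}$. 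Placing the dependence in $E$ instead makes the exponent $E^{s/(s+b)}$ explicit, and together with $\|f_1-f_2\|_{C^s}^{s/(s+b)}$ this rigorously yields $\gamma=2s/(s+b)$.
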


\begin{proof} 
It was proved in \cite[(32)]{Rueland2021SingleMeasurement} that for $w=u_{f_{1}}$ \eqref{eq:polybound-u} holds true for some constants $C_{\rm low}>0$ and $b>0$, depending only on $\Omega,\mO,d,M_{0},\phi$. We now deduce a bound as in \eqref{eq:interpolation1} for $w=u_{f_{1}}$ and $f=f_{1}-f_{2}$. We first estimate 
\begin{equation*}
\norm{u_{f_{1}}(f_{1}-f_{2})}_{L^{2}(\Omega)} \le C(\Omega,s)\norm{u_{f_{1}}(f_{1}-f_{2})}_{H_{\overline{\Omega}}^{s}}^{\frac{1}{2}}\norm{u_{f_{1}}(f_{1}-f_{2})}_{H^{-s}(\Omega)}^{\frac{1}{2}}
\end{equation*}
as in \cite[(30)]{Rueland2021SingleMeasurement}. The first term on the right-hand side can be controlled using \eqref{eq:algebra} as follows: 
\begin{equation*}
\begin{aligned}
& \norm{u_{f_{1}}(f_{1}-f_{2})}_{H_{\overline{\Omega}}^{s}} \le C(\Omega,s) \norm{u_{f_{1}}}_{H^{s}(\mR^{d})}\left( \norm{f_{1}}_{C^{s}(\Omega)} \vee \norm{f_{2}}_{C^{s}(\Omega)} \right) \\ 
& \quad \le C(\Omega,s,M_{0},\phi) \left( \norm{f_{1}}_{C^{s}(\Omega)} \vee \norm{f_{2}}_{C^{s}(\Omega)} \right).
\end{aligned}
\end{equation*}
On the other hand, the second term on the right-hand side can be estimated using \eqref{eq:algebra-dual} and \Cref{lem:inverse-estimate1}: 
\begin{equation*}
\begin{aligned}
& \norm{u_{f_{1}}(f_{1}-f_{2})}_{H^{-s}(\Omega)} = \norm{u_{f_{1}}f_{1}-u_{f_{2}}f_{2} - f_{2}(u_{f_{1}}-u_{f_{2}})}_{H^{-s}(\Omega)} \\ 
& \quad \le \norm{(-\Delta)^{s}(u_{f_{1}}-u_{f_{2}})}_{H^{-s}(\Omega)} + \norm{f_{2}}_{C^{s}(\Omega)}\norm{u_{f_{1}}-u_{f_{2}}}_{H^{-s}(\Omega)} \\ 
& \quad \le \tilde{C}_{\rm stab}^{2}\tilde{E}^{2}(1+\norm{f_{2}}_{C^{s}(\Omega)})\left|\log\left(\tilde{E}\norm{G(f_{1})-G(f_{2})}_{H^{-s}(\mD)}^{-1}\right)\right|^{-2\tilde{\mu}}. 
\end{aligned}
\end{equation*}
for some constants $\tilde{C}_{\rm stab},\tilde{E},\tilde{\mu}>0$ (depending only on $\Omega,\mD,d,s,M_{0},\phi$). Combining the above three equations, we reach 
\begin{equation}
\begin{aligned} 
& \norm{u_{f_{1}}(f_{1}-f_{2})}_{L^{2}(\Omega)} \\
& \quad \le C_{\rm stab}(1+\norm{f_{1}}_{C^{s}(\Omega)} \vee \norm{f_{2}}_{C^{s}(\Omega)}) \left|\log\left(\tilde{E}\norm{G(f_{1})-G(f_{2})}_{H^{-s}(\mD)}^{-1}\right)\right|^{-\tilde{\mu}}
\end{aligned} 
\end{equation}
for some $C_{\rm stab},\tilde{E},\tilde{\mu}>0$ (depending only on $\Omega,\mD,d,s,M_{0},\phi$), which verifies \eqref{eq:interpolation1} with 
\begin{equation*}
\begin{aligned} 
E &= 1+\norm{f_{1}}_{C^{s}(\Omega)} \vee \norm{f_{2}}_{C^{s}(\Omega)}, \\ 
\epsilon &= (1+\norm{f_{1}}_{C^{s}(\Omega)} \vee \norm{f_{2}}_{C^{s}(\Omega)})\tilde{E}^{-1}\norm{G(f_{1})-G(f_{2})}_{H^{-s}(\mD)}. 
\end{aligned} 
\end{equation*}
Now we use \Cref{lem:interpolation} to conclude \eqref{eq:stability-quantify1} with $\gamma=\frac{2s}{s+b}$, $\mu=\frac{s}{s+b}\tilde{\mu}>0$, provided 

\begin{equation*}
\epsilon = (1+\norm{f_{1}}_{C^{s}(\Omega)} \vee \norm{f_{2}}_{C^{s}(\Omega)})\tilde{E}^{-1}\norm{G(f_{1})-G(f_{2})}_{H^{-s}(\mD)} < \frac{1}{2}. 
\end{equation*} 
We now show that \eqref{eq:stability-quantify1} remains to be true when 
\begin{equation}
\frac{1}{2(1+\norm{f_{1}}_{C^{s}(\Omega)} \vee \norm{f_{2}}_{C^{s}(\Omega)})} \le \tilde{E}^{-1}\norm{G(f_{1})-G(f_{2})}_{H^{-s}(\mD)} < \frac{1}{2}. \label{eq:stability-case1}
\end{equation}
We see that 
\begin{equation*}
\log \left( \tilde{E}\norm{G(f_{1})-G(f_{2})}_{H^{-s}(\mD)}^{-1} \right) \ge \log 2 > 0 
\end{equation*}
and 
\begin{equation*}
\log \left( \tilde{E}\norm{G(f_{1})-G(f_{2})}_{H^{-s}(\mD)}^{-1} \right) \le \log \left(2(1+\norm{f_{1}}_{C^{s}(\Omega)} \vee \norm{f_{2}}_{C^{s}(\Omega)})\right),  
\end{equation*}
then 
\begin{equation*}
\begin{aligned} 
& \left(1+\norm{f_{1}}_{C^{s}(\Omega)}^{\gamma} \vee \norm{f_{2}}_{C^{s}(\Omega)}^{\gamma}\right)\left|\log \left( \tilde{E}\norm{G(f_{1})-G(f_{2})}_{H^{-s}(\mD)}^{-1} \right) \right|^{-\mu}  \\ 
& \quad \ge \left(1+\norm{f_{1}}_{C^{s}(\Omega)}^{\gamma} \vee \norm{f_{2}}_{C^{s}(\Omega)}^{\gamma}\right)\left| \log \left(2(1+\norm{f_{1}}_{C^{s}(\Omega)} \vee \norm{f_{2}}_{C^{s}(\Omega)})\right) \right|^{-\mu} \\ 
& \quad \ge \inf_{t\ge 0} (1+t^{\gamma})|\log(2(1+t)|^{-\mu} > 0. 
\end{aligned} 
\end{equation*}
Combining the above inequality with the trivial estimate $\norm{f_{1}-f_{2}}_{L^{\infty}(\mO)} \le 2M_{0}$, we now conclude that \eqref{eq:stability-quantify1} holds true when 
\begin{equation}
\tilde{E}^{-1}\norm{G(f_{1})-G(f_{2})}_{H^{-s}(\mD)} < \frac{1}{2}. \label{eq:case1}
\end{equation}
Finally, we again apply the trivial estimate $\norm{f_{1}-f_{2}}_{L^{\infty}(\mO)} \le 2M_{0}$ to conclude \eqref{eq:stability-quantify1} in the case where \eqref{eq:case1} does not hold. 
\end{proof} 

\section{Statistical conclusions}\label{sec_proof_of_main}
\subsection{Posterior contraction rates}  

We first verify some sufficient conditions which are needed in the Bayesian approach. 

\begin{proposition}\label{prop:sufficient} 
Let $d\in\mN$ and $\Omega$, $\mO$, $\mD$ be nonempty bounded smooth domains in $\mR^{d}$ with $\overline{\mO}\subset\Omega$ and $\overline{\Omega}\cap\overline{\mD}=\emptyset$. Let $0<s<1$, $0\not\equiv\phi\in C_{c}^{\infty}(\Omega_{e})$ and let $\Phi$ be the link function given in \Cref{assu:link}. Let $\beta > 1 + d/2$ be an integer and $\calG$ be the reparametrized forward map given by \eqref{eq:reparametrized-forward-map}. Then one has  
\begin{equation*}
\norm{\calG(F_{1})-\calG(F_{2})}_{L^{2}(\mD)} \lesssim \norm{F_{1}-F_{2}}_{L^{2}(\mO)} \quad \text{for all $F_{1},F_{2}\in H_{0}^{\beta}(\mO)$}
\end{equation*}
and 
\begin{equation*}
\norm{\calG}_{H_{0}^{\beta}(\mO)\rightarrow L^{\infty}(\mD)} \equiv \sup_{F\in H_{0}^{\beta}(\mO)} \norm{\calG(F)}_{L^{\infty}(\mD)} < \infty. 
\end{equation*}
\end{proposition}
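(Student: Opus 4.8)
The plan is to reduce both assertions directly to the forward estimates already proven, namely \Cref{lem:forward-estimate} and \Cref{lem:uniform-bound}, by checking that for every $F\in H_{0}^{\beta}(\mO)$ the composition $\Phi\circ F$ is an admissible potential in the sense required by those lemmas. Since $\calG(F)=G(\Phi\circ F)$ by \eqref{eq:reparametrized-forward-map}, everything reduces to controlling $\Phi\circ F$ (and differences thereof) in terms of $F$.

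For the Lipschitz bound, fix $F_{1},F_{2}\in H_{0}^{\beta}(\mO)$ and put $f_{k}:=\Phi\circ F_{k}$. By part \ref{itm:link-i} of \Cref{assu:link} one has $0<f_{k}<M_{0}$ pointwise, hence $f_{k}\in L^{\infty}(\Omega)$ and $f_{k}\ge 0$, so $f_{1},f_{2}$ meet the hypotheses of \Cref{lem:forward-estimate}. By part \ref{itm:link-ii} of \Cref{assu:link} with $k=1$, the constant $L:=\sup_{z\in\mR}\abs{\Phi'(z)}$ is finite, so $\Phi$ is globally $L$-Lipschitz and $\abs{f_{1}(x)-f_{2}(x)}\le L\abs{F_{1}(x)-F_{2}(x)}$ for a.e.\ $x$. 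Moreover, since every element of $H_{0}^{\beta}(\mO)$ vanishes a.e.\ outside $\overline{\mO}$ and $\Phi(0)=1$, the difference $f_{1}-f_{2}$ vanishes on $\Omega\setminus\mO$, so that
\[
\norm{f_{1}-f_{2}}_{L^{2}(\Omega)}=\norm{f_{1}-f_{2}}_{L^{2}(\mO)}\le L\,\norm{F_{1}-F_{2}}_{L^{2}(\mO)}.
\]
Inserting this into \Cref{lem:forward-estimate} gives $\norm{\calG(F_{1})-\calG(F_{2})}_{L^{2}(\mD)}=\norm{G(f_{1})-G(f_{2})}_{L^{2}(\mD)}\le CL\,\norm{F_{1}-F_{2}}_{L^{2}(\mO)}$, with $CL$ independent of $F_{1},F_{2}$, which is the first claim.

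For the uniform bound, fix any $F\in H_{0}^{\beta}(\mO)$ and again set $f:=\Phi\circ F$; as above $0\le f<M_{0}$ and $f\in L^{\infty}(\Omega)$, so \Cref{lem:uniform-bound} applies and yields $\norm{\calG(F)}_{L^{\infty}(\mD)}=\norm{G(f)}_{L^{\infty}(\mD)}\le C(d,s,\abs{\Omega},\dist(\Omega,\mD),\phi)$, a bound that does not depend on $F$. Taking the supremum over $F\in H_{0}^{\beta}(\mO)$ gives the second claim.

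There is no genuine obstacle here: all the analytic content has been absorbed into \Cref{lem:forward-estimate} (whose proof rests on the Alessandrini identity, the $L^{\infty}$-bound \eqref{eq:L-infty-bound}, the elliptic estimate of \cite{covi2022globalinversefractionalconductivity}, and a cutoff approximation) and \Cref{lem:uniform-bound}. The only points demanding attention are structural: verifying that the link function $\Phi$ produces a nonnegative, uniformly bounded potential, so that the maximum principle and the cited elliptic regularity are applicable, and using the compact support of $F_{k}$ in $\mO$ to localize $f_{1}-f_{2}$. The hypothesis that $\beta>1+d/2$ is an integer is not needed for this proposition beyond guaranteeing $H_{0}^{\beta}(\mO)\hookrightarrow C(\mO)$ so that $\Phi\circ F$ is unambiguously defined; the estimates themselves use only $F_{k}\in L^{2}(\mO)$ with support in $\overline{\mO}$.
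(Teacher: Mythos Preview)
Your proposal is correct and follows exactly the paper's approach: the paper's proof of this proposition consists of a single sentence stating that it is an immediate consequence of \Cref{lem:forward-estimate} and \Cref{lem:uniform-bound}. You have simply unpacked the implicit steps (Lipschitz property of $\Phi$, nonnegativity and boundedness of $\Phi\circ F$, and support localization), which is entirely in keeping with what the authors intend.
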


\begin{proof}
This proposition is an immediate consequence of \Cref{lem:forward-estimate} and \Cref{lem:uniform-bound}. 
\end{proof}

We are now ready to prove the main results. 

\begin{proof}[Proof of \Cref{thm:1}]
First of all, it follows from \Cref{prop:sufficient} that the conditions in \cite[Theorem~14]{giordano2020consistency} (which can also be found in \cite[Lemma~5.1]{KW25BayesSubdiffusion}) are satisfied with $\tau=1$ and $\kappa=0$. Hence, there exists a sufficiently large $L>0$ such that 
\begin{equation*}
\Pi_{N}\left( F:\norm{\calG(F)-\calG(F_{0})}_{L^{2}(\mO)} > L\delta_{N} | Y^{(N)} , X^{(N)} \right) = O_{\mP_{F_{0}}^{N}} (e^{-DN\delta_{N}^{2}}) \quad \text{as $N\rightarrow\infty$,} 
\end{equation*}
where $\delta_{N}=N^{-\frac{\alpha}{2\alpha+d}}$ and, furthermore, there exists a sufficiently large $M$ such that 
\begin{equation}
\Pi_{N}\left(F:\norm{F}_{C^{1}(\mO)} > M | Y^{(N)} , X^{(N)}\right)= O_{\mP_{F_{0}}^{N}} (e^{-DN\delta_{N}^{2}}) \quad \text{as $N\rightarrow\infty$.} \label{eq:C1}
\end{equation}
From \Cref{prop:inverse-estimate}, we now see that 
\begin{equation*}
\begin{aligned}
& \tilde{\Pi}_{N}\left( f : \norm{f-f_{0}}_{L^{\infty}(\mO)} > C\omega(L'\delta_{N}) |Y^{(N)} , X^{(N)} \right) \\ 
& \quad \le \tilde{\Pi}_{N}\left( f : \norm{G(f)-G(f_{0})}_{L^{2}(\mD)} > L\delta_{N} |Y^{(N)} , X^{(N)} \right) \\
& \qquad + \tilde{\Pi}_{N}\left( f : \norm{f}_{C^{s}(\mO)} > M' |Y^{(N)} , X^{(N)} \right) \\ 
& \quad \le \Pi_{N}\left( F : \norm{\calG(F)-\calG(F_{0})}_{L^{2}(\mD)} > L\delta_{N} |Y^{(N)} , X^{(N)} \right) \\
& \qquad + \Pi_{N}\left( F : \norm{F}_{C^{1}(\mO)} > M |Y^{(N)} , X^{(N)} \right) \\ 
& \quad = O_{\mP_{F_{0}}^{N}} (e^{-DN\delta_{N}^{2}}) \quad \text{as $N\rightarrow\infty$.}
\end{aligned}
\end{equation*}
Finally, we compute that 
\begin{equation*}
\begin{aligned}
& C\omega(L'\delta_{N}) = C\abs{\log(\tilde{E}L'N^{-\frac{\alpha}{2\alpha+d}})}^{-\mu} \\ 
& \quad = C\left| \log (\tilde{E}L') - \frac{\alpha}{2\alpha+d}\log N \right|^{-\mu} \sim C(\log N)^{-\mu} \quad \text{as $N\rightarrow\infty$}
\end{aligned}
\end{equation*}
and hence we conclude our theorem. 
\end{proof}

\begin{proof}[Proof of \Cref{thm:2}]
The idea is to use tools from the general machinery developed in \cite{giordano2020consistency, gine2021mathematical}. In particular, the proof is based on \cite[Theorem~6]{giordano2020consistency} (almost identical to \cite[Theorem~2.6]{FKW24ProbabilityInverseScattering}). By Jensen's inequality it is enough to show that
\begin{equation*}
\mP_{F_{0}}^{N} \left( \bbE\left[\norm{\overline{F}_{N}-F_{0}}_{L^{\infty}(\mO)}\right] > \tilde{C}(\log N)^{-\mu} \right) \rightarrow 0 \quad \text{as $N\rightarrow\infty$.}
\end{equation*}
Then one can split the expectation to two parts: For large $M>0$ we look at the case with $\norm{\overline{F}_N}_{C^{1}(\mO)}\leq M$ and one with $\norm{\overline{F}_N}_{C^{1}(\mO)}> M$.
For $\norm{\overline{F}_N}_{C^{1}(\mO)}> M$ one uses Cauchy-Schwarz, \Cref{thm:1}, \cite[Lemma 16, Lemma 23]{giordano2020consistency} and \cite[Lemma 7.3.2]{gine2021mathematical} to get the desired asymptotic behavior. For $\norm{F}_{C^{1}(\mO)}\leq M$ one reduces to the first case by using the properties of the link function together with the mean value theorem and inverse function theorem. For further details, see the proofs of \cite[Theorem~6]{giordano2020consistency}.
\end{proof}

\subsection{Minimax optimality of the contraction rates} 

For the proof of \Cref{thm_minimax_optimal} we will follow the method used in the proof \cite[Theorem 2]{abraham2020statistical} (see also the proof of \cite[Theorem 2.8]{FKW24ProbabilityInverseScattering}).
We shall use the exponential instability of the Calder{\'o}n problem for the fractional Schr\"{o}dinger equation proved in \cite[Corollary 1.2]{RS18Instability}. We will recall the result here for the convenience of the reader (adopted to our notation).

\begin{theorem}[\text{\cite[Corollary 1.2]{RS18Instability}}]\label{thm_instability}
    Let $d\geq1$ and $B_1\subset\mR^d$, assume that $m\in\mN$. There is a potential $f\in L^{\infty}(B_1)$, a sequence of errors $\{\e_k\}_{k\in\mN}$ with $\e_k\to0$ and a sequence of potentials $\{f_k\}_{k\in\mN}\subset C^m(\overline{B}_1)$ such that 
    \begin{enumerate}[label=(\roman*)]
        \item $\norm{f-f_k}_{L^{\infty}(B_1)}\geq\frac{\e_k}{2}$ and $\norm{\Lambda_f-\Lambda_{f_k}}_{L^2(B_3\setminus\overline{B}_2)\to L^2(B_3\setminus\overline{B}_2)}\leq C\exp\left(-\e_k^{-\frac{d}{(2d+3)m}}\right),$
        \item for all $0\leq m'< m$ we have $f_k\to f$ in $C^{m'}(\overline{B}_1)$.
    \end{enumerate}
\end{theorem}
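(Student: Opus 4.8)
\medskip
\noindent\emph{Sketch of the proof of \cite[Corollary~1.2]{RS18Instability}.} The plan is to run \emph{Mandache's entropy--comparison argument} \cite{Man01instability} (see also the abstract framework of \cite{KRS21InstabilityMechanism}), the one genuinely new ingredient compared with the local Calder{\'o}n problem being the analysis of the nonlocal forward operator. One balances two metric--entropy estimates: the forward map $f\mapsto\Lambda_f$ is \emph{strongly smoothing}, so the image of an $L^\infty$--bounded family of nonnegative potentials has only poly--logarithmic metric entropy, while a bounded subset of $C^m(\overline{B}_1)$ already carries \emph{exponentially} large metric entropy for the norm $\norm{\cdot}_{L^\infty(B_1)}$. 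A pigeonhole argument then forces potentials that are far apart in $L^\infty(B_1)$ with exponentially close DN maps, and a tree/compactness refinement packages these into the required single limit.

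\medskip
\noindent\textbf{Step 1 (entropy of the image).} Fix $M_0$ and restrict to $0\le f\le M_0$ on $B_1$, so the exterior Dirichlet problem \eqref{eq:Dirichlet-problem} is well posed, \eqref{eq:L-infty-bound} holds, and $\Lambda_f-\Lambda_0$ is a bounded operator on $L^2(B_3\setminus\overline{B}_2)$ with an off--diagonal Schwartz kernel $K_f(x,y)$. Since $B_3\setminus\overline{B}_2$ lies a fixed positive distance from $\overline{B}_1$, one shows that $K_f$ extends holomorphically to a fixed complex neighbourhood of $(B_3\setminus\overline{B}_2)^2$ in $\mC^{2d}$ with bounds uniform in $f$: qualitatively $(-\Delta)^s u_f$ is $C^\infty$ there by \Cref{lem:qualitative}, and one upgrades this to uniform real--analyticity by combining the analyticity of the kernel $\abs{x-y}^{-d-2s}$ away from the diagonal with the real--analytic dependence of the fractional Dirichlet resolvent $((-\Delta)^s+f)^{-1}$ on the bounded multiplication operator $f$ (Neumann series), using \Cref{lem:uniform-bound} and \Cref{lem:forward-estimate} for the uniform size and Lipschitz bounds. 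Approximating such a uniformly holomorphic kernel by Taylor polynomials of degree $\sim\log(1/\delta)$ about a $\delta$--net of base points and discretizing the $\lesssim(\log(1/\delta))^{2d}$ coefficients then covers $\{\Lambda_f-\Lambda_0:\ 0\le f\le M_0\}$, in operator norm on $L^2(B_3\setminus\overline{B}_2)$, by $\exp\!\big(C(\log(1/\delta))^{2d+3}\big)$ balls of radius $\delta$.

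\medskip
\noindent\textbf{Step 2 (parameter entropy, pigeonhole and the limit).} Fix a bump $0\le\psi\in C_c^\infty(B_{1/2})$ with maximum at the origin; for a scale $h>0$ place $\sim h^{-d}$ disjointly supported copies $\psi^h_j:=h^m\psi((\cdot-z_j)/h)$ on a grid in $B_{1/2}$, so $\norm{\psi^h_j}_{C^m(\overline{B}_1)}\lesssim1$ and $\norm{\psi^h_j}_{L^\infty}\simeq h^m$. Along a fast decreasing sequence $h_k\downarrow0$ with small weights $\lambda_k\downarrow0$, $\sum_k\lambda_k<\infty$, define the level--$k$ potentials by adding $\lambda_k\sum_{j\in A}\psi^{h_k}_j$ ($A\subseteq\{1,\dots,\lfloor ch_k^{-d}\rfloor\}$) to the level--$(k-1)$ ones; these stay in a fixed compact subset of $C^m(\overline{B}_1)$ with values in $\{1\le f\le M_0\}$, are pairwise $\gtrsim\lambda_k h_k^m$--separated in $L^\infty(B_1)$ at level $k$, and number $2^{\lfloor ch_k^{-d}\rfloor}$ per ancestor. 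Applying Step~1 with $\delta=\mu_k$, where
\begin{equation*}
\mu_k\simeq\exp\!\big(-\e_k^{-d/((2d+3)m)}\big),\qquad \e_k\simeq\lambda_k h_k^m ,
\end{equation*}
one has $C(\log(1/\mu_k))^{2d+3}\ll c\,h_k^{-d}$, so the $2^{\lfloor ch_k^{-d}\rfloor}$ level--$k$ extensions of any ancestor cannot all have $\mu_k$--separated images; a tree / K{\"o}nig's lemma pigeonhole then produces an infinite branch $A^*$ such that for every $k$ the $k$th truncation $f^{A^*}_k$ and a sibling $g_k$ (differing only in their level--$k$ bumps) satisfy $\norm{\Lambda_{f^{A^*}_k}-\Lambda_{g_k}}\le2\mu_k$ and $\norm{f^{A^*}_k-g_k}_{L^\infty(B_1)}\gtrsim\lambda_k h_k^m$. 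Set $f:=\lim_k f^{A^*}_k$ (which exists in $C^{m'}(\overline{B}_1)$ for every $m'<m$, the level--$l$ increment having $C^{m'}$--norm $\lesssim\lambda_l h_l^{\,m-m'}$, summable) and $f_k:=g_k$. Choosing $(h_k),(\lambda_k)$ to decrease fast enough that the truncation error $\norm{f-f^{A^*}_k}_{L^\infty(B_1)}$, hence also $\norm{\Lambda_f-\Lambda_{f^{A^*}_k}}$ by a forward Lipschitz bound in the spirit of \Cref{lem:forward-estimate}, is $\lesssim\mu_k$, one obtains $f\in L^\infty(B_1)$, $f_k\in C^m(\overline{B}_1)$ with $f_k\to f$ in $C^{m'}$ for all $m'<m$, $\norm{f-f_k}_{L^\infty(B_1)}\ge\e_k/2$, and $\norm{\Lambda_f-\Lambda_{f_k}}_{L^2(B_3\setminus\overline{B}_2)\to L^2(B_3\setminus\overline{B}_2)}\le C\exp(-\e_k^{-d/((2d+3)m)})$, which is the assertion. (The exponent $\tfrac{d}{(2d+3)m}$ is precisely the one balancing the $(\log(1/\delta))^{2d+3}$ image entropy of Step~1 against the $h^{-d}$ parameter entropy at separation $\simeq h^m$; the bookkeeping is carried out in \cite{RS18Instability}, following \cite{Man01instability}.)

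\medskip
\noindent\textbf{Main obstacle.} The hard part is Step~1: proving that the exterior DN kernel of the nonlocal operator $(-\Delta)^s$ is uniformly real--analytic over the whole $L^\infty$--ball of potentials --- with analyticity width and constants independent of $f$ --- and converting this into the sharp poly--logarithmic entropy bound with the correct exponent. This is exactly where the comparatively weak regularity theory of fractional elliptic problems must be handled with care and where \cite{RS18Instability} does the substantive analytic work; Step~2 is then the soft, model--independent part of the machinery.
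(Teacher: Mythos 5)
The paper does not prove this statement: \Cref{thm_instability} is imported verbatim from \cite[Corollary~1.2]{RS18Instability} and used as a black box, so there is no in-paper argument to compare yours against. Judged on its own terms, your sketch correctly identifies the high-level strategy of the cited reference — Mandache's entropy comparison between a poly-logarithmic bound on the metric entropy of the image of the forward map and the exponential metric entropy (in $L^\infty$) of a bounded set in $C^m$, a pigeonhole at each scale, and a fast-decaying multiscale bump construction passed to a limit — and your Step~2 is a serviceable reconstruction of how the ``sequence plus limit'' form of the corollary is extracted from the single-scale instability statement.

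The genuine gap is in Step~1. First, the mechanism you invoke does not establish what you need: real-analytic dependence of $((-\Delta)^s+f)^{-1}$ on the multiplier $f$ (and the Neumann series, which in any case converges only for small $M_0$; for general $0\le f\le M_0$ one only has uniform boundedness of the resolvent by coercivity) is irrelevant to analyticity of the kernel $K_f(x,y)$ in the \emph{spatial} variables, which is the property your Taylor-net covering requires. What is actually needed is a quantitative smoothing estimate for $\Lambda_f-\Lambda_0$ on $B_3\setminus\overline{B}_2$ with explicit control of all derivatives, or of matrix coefficients in an orthonormal basis, uniformly over the potential class; this is precisely the substantive analytic content of \cite{RS18Instability} and cannot be waved through. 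Second, your own count is inconsistent with the exponent you assert: discretizing $\sim(\log(1/\delta))^{2d}$ Taylor coefficients to accuracy $\delta$ yields a covering by at most $\exp\bigl(C(\log(1/\delta))^{2d+1}\bigr)$ balls, not $\exp\bigl(C(\log(1/\delta))^{2d+3}\bigr)$. In \cite{RS18Instability} the exponent $2d+3$ is forced by the quantitative smoothing estimate actually proved there (decay of the matrix coefficients of $\Lambda_f-\Lambda_0$ in a suitable orthonormal basis of $L^2(B_3\setminus\overline{B}_2)$ at a rate strictly slower than uniform kernel analyticity would give), which produces image entropy of order $(\log(1/\delta))^{2d+3}$ rather than $(\log(1/\delta))^{2d+1}$. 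So either your analyticity claim is not provable by the indicated route, or, if it were, it would yield a different (stronger) instability exponent than the one stated; in either case Step~1 as written does not deliver the theorem, and the balancing in Step~2 (which hardwires the exponent $d/((2d+3)m)$) inherits the mismatch.
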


We now prove \Cref{thm_minimax_optimal}.

\begin{proof}[Proof of \Cref{thm_minimax_optimal}]
    Following \cite[Section 6.3.1 and Theorem 6.3.2]{gine2021mathematical} we can reduce the proof of minimax lower bound to a testing problem with two hypotheses. By using the test $\Psi=\one_{\{\norm{\tilde f-f_1}_{L^{\infty}}<\norm{\tilde f-f_0}_{L^{\infty}}\}}$ we have $H_0\colon f=f_0$ against $H_1\colon f=f_1$. Now from the reduction made in the beginning of \cite[Section 6.3.1]{gine2021mathematical} we have
    \begin{equation*}
        \inf_{\tilde f}\sup_{f\in\mF_{M_{0}}^{\beta}}\mP^N_f\left(\norm{\tilde f-f}_{L^{\infty}(\mO)}\geq\frac{1}{2}(\log N)^{-\mu}\right)\geq\inf_{\psi}\max\{\mP^N_{f_0}(\psi\neq0), \mP^N_{f_1}(\psi\neq1)\}.
    \end{equation*}
    Recall that $p_{f}^{\otimes N}$ is the Radon-Nikodym density of $\mP^N_f=\otimes_{i=1}^{N}\mP^i_f$. Next let us define an event $B=\left\{\frac{p^{\otimes N}_{f_0}}{p^{\otimes N}_{f_1}}\geq\frac{1}{2}\right\}$ for which we have
    \begin{align*}
        \mP^N_{f_0}(\psi\neq0)&=\int_{\{\psi\neq0\}}p^{\otimes N}_{f_0}\rmd\mu \geq \int\one_{\{\psi=1\}}\one_B p^{\otimes N}_{f_0}\rmd\mu = \int\one_{\{\psi=1\}}\one_B \frac{p^{\otimes N}_{f_0}}{p^{\otimes N}_{f_1}}p^{\otimes N}_{f_1}\rmd\mu\\
        &\geq \frac{1}{2}\mP^N_{f_1}(\psi=1)>\frac{1}{2}\left(\mP^N_{f_1}(\psi=1)-\mP^N_{f_1}(B^c)\right).
    \end{align*}
    Then denoting $p_1=\mP^N_{f_1}(\psi=1)$ gives
    \begin{align*}
        \max\{\mP^N_{f_0}(\psi\neq0), \mP^N_{f_1}(\psi\neq1)\} &= \max\left\{\frac{1}{2}(p_1 - \mP^N_{f_1}(B^c), 1 - p_1\right\}\\
        &\geq \inf_{p\in[0,1]}\max\left\{\frac{1}{2}(p - \mP^N_{f_1}(B^c), 1 - p\right\}.
    \end{align*}
    This infimum is obtained when $1-p=\frac{1}{2}(p - \mP^N_{f_1}(B^c)$ and thus $p=1-\frac{1}{3}\mP^N_{f_1}(B)$. Hence
    \begin{equation*}
        \max\{\mP^N_{f_0}(\psi\neq0), \mP^N_{f_1}(\psi\neq1)\}\geq \frac{1}{3}\mP^N_{f_1}(B).
    \end{equation*}
    Estimating the term $\mP^N_{f_1}(B)$ further we have
    \begin{align*}
        \mP^N_{f_1}(B) &= \mP^N_{f_1}\left(\frac{p^{\otimes N}_{f_1}}{p^{\otimes N}_{f_0}}\leq2\right) = 1 - \mP^N_{f_1}\left(\log\frac{p^{\otimes N}_{f_1}}{p^{\otimes N}_{f_0}}>\log2\right)\\
        &\geq 1 - \mP^N_{f_1}\left(\left|\log\frac{p^{\otimes N}_{f_1}}{p^{\otimes N}_{f_0}} \right| >\log2\right)\geq 1 - \frac{1}{\log2}\bbE^N_{f_1}\left[\left| \log\frac{p^{\otimes N}_{f_1}}{p^{\otimes N}_{f_0}} \right| \right],
    \end{align*}
    where in the last estimate we used Markov's inequality. Using the second Pinsker inequality (see e.g. \cite[Proposition 6.1.7 (b)]{gine2021mathematical}) gives
    \begin{equation*}
        \mP^N_{f_1}(B)\geq 1-\frac{1}{\log2}\left(\KL(p_{f_0}^{\otimes N},p_{f_1}^{\otimes N}) + \sqrt{2\KL(p_{f_0}^{\otimes N},p_{f_1}^{\otimes N})}\right),
    \end{equation*}
    where $\KL(\,\cdot\,,\,\cdot\,)$ is the Kullback-Leibler distance defined by
    \begin{equation*}
        \KL(p_{f_0}^{\otimes N},p_{f_1}^{\otimes N}) = \bbE^{N}_{f_0}\left(\log\frac{p_{f_0}^{\otimes N}}{p_{f_1}^{\otimes N}}\right).
    \end{equation*}
    Putting the above together we have proven so far that
    \begin{equation}
    \begin{aligned}
    & \inf_{\tilde f}\sup_{f\in\mF_{M_{0}}^{\beta}}\mP^N_f\left(\norm{\tilde f-f}_{L^{\infty}(\mO)}\geq\frac{1}{2}(\log N)^{-\mu}\right) \\ 
    & \quad \geq 1-\frac{1}{\log2}\left(\KL(p_{f_0}^{\otimes N},p_{f_1}^{\otimes N}) + \sqrt{2\KL(p_{f_0}^{\otimes N},p_{f_1}^{\otimes N})}\right).
    \end{aligned}\label{eq_infsup_proof} 
    \end{equation}
    Let us then focus on the Kullback-Leibler distance. We have
    \begin{equation*}
        \KL(p_{f_0}^{\otimes N},p_{f_1}^{\otimes N}) = N\bbE^1_{f_0}\left[\log\frac{p_{f_0}(Y_1,X_1)}{p_{f_1}(Y_1,X_1)}\right] = \frac{N}{2\sigma^2}\norm{G(f_0) - G(f_1)}_{L^2(\mD)},
    \end{equation*}
    since $X_i, Y_i$ are independent in the first equality and the second equality comes from \cite[Lemma 23]{giordano2020consistency}. Recall that $G$ is the DN map (see \eqref{eq:forward-map}) and thus we have
    \begin{equation*}
        \norm{G(f_0) - G(f_1)}_{L^2(\mD)}\leq \norm{\Lambda_{f_0}-\Lambda_{f_1}}^{1/2}_{L^2(\mD)\to L^2(\mD)}
    \end{equation*}
    (see e.g. the beginning of the proof of \Cref{lem:forward-estimate}). Let $\mu>\log_{\log_e N}2\frac{2d+3}{d}$ ($N$ is going to be large in the end) and $\delta = (\log N)^{-\mu}$. By \Cref{thm_instability} there are $f_0, f_1$ such that 
    \begin{equation*}
        \norm{f_0-f_1}_{L^{\infty}(\Om)}\geq \frac{\delta}{2}\quad\text{and}\quad \norm{\Lambda_{f_0}-\Lambda_{f_1}}_{L^2(\mD)\to L^2(\mD)}\leq C\exp\left(-\delta^{\frac{d}{2d+3}}\right).
    \end{equation*}
    Hence
    \begin{align*}
        \KL(p_{f_0}^{\otimes N},p_{f_1}^{\otimes N}) \leq \frac{N}{2\sigma^2}C\exp\left(-\frac{1}{2}\delta^{\frac{d}{2d+3}}\right) = \frac{C}{2\sigma^2}\exp\left(\log N - \frac{1}{2}(\log N)^{\frac{d}{2d+3}}\right)
    \end{align*}
    and for the choice of $\mu$ the exponent of $\exp$ is negative. Thus choosing $N$ large we get that 
    \begin{equation*}
        \KL(p_{f_0}^{\otimes N},p_{f_1}^{\otimes N}) \leq \tilde\e
    \end{equation*}
    for some $\tilde\e>0$. Using this in \eqref{eq_infsup_proof} gives
    \begin{equation*}
        \inf_{\tilde f}\sup_{f\in\mF_{M_{0}}^{\beta}}\mP^N_f\left(\norm{\tilde f-f}_{L^{\infty}(\mO)}\geq\frac{1}{2}(\log N)^{-\mu}\right)\geq 1-\e
    \end{equation*}
    for $\e>0$. Choosing $N$ large enough ends the proof.
\end{proof}

\section{Numerical demonstrations of the Bayesian procedure\label{sec:MCMC}}

The main objective of this section is to numerically illustrate the convergence of the Bayesian procedure using a Markov Chain Monte Carlo (MCMC) algorithm, based on the log-likelihood, rather than the faster and more accurate steepest-descent methods, based on a suitable Tikhonov functional, which are commonly employed in practice \cite{Li24NumericalFractionalCalderon1,Li24NumericalFractionalCalderon2}. Owing to computational complexity, we restrict our consideration to the one-dimensional case $d=1$. Let $\ell$ be a positive real number, $K$ be a positive integer and set the mesh size $h = 2\ell/K$. Based on \cite[(2.10)]{DvWZ18FDMFractionalLaplacian} (with an appropriately chosen parameter), the numerical approximation of $(-\Delta)^{s}v(x)$ for $x \in (-\ell,\ell)$, under the assumption that $v(x)=0$ outside $(-\ell,\ell)$, is given by $(-\Delta)_{h}^{s}v(x_{i})=A\mathbf{v}$, where $x_{i}=-\ell+ih$ are grid points for $0\le i \le K$, $\mathbf{v}=(v(x_{1}),v(x_{2}),\cdots,v(x_{K-1}))^{\intercal}$ is a column vector and the matrix $A=(A_{ij})$ is a symmetric Toeplitz matrix given by 
\begin{equation*}
A_{ij} = \frac{c_{1,2s}}{(1-s)h^{2s}} \left\{ \begin{aligned}
& \begin{aligned}
& \sum_{k=2}^{K} \frac{(k+1)^{1-s}-(k-1)^{1-s}}{k^{1+s}} \\ 
& \quad + \frac{K^{1-s}-(K-1)^{1-s}}{K^{1+s}} + 2^{1-s} + \frac{1-s}{s K^{2s}}
\end{aligned}  && \text{when $j=i$} \\ 
& - \frac{(\abs{j-i}+1)^{1-s}-(\abs{j-i}-1)^{1-s}}{2\abs{j-i}^{1+s}} && \text{when $j \neq i$ and $j \neq i\pm1$,} \\ 
& -2^{-s} && \text{when $j=i\pm1$,}
\end{aligned} \right.
\end{equation*}
for all $i,j=1,2,\cdots,K-1$, where $c_{1,2s}=\frac{4^{s}s\Gamma(\frac{1+2s}{2})}{\pi^{\frac{1}{2}}\Gamma(1-s)} > 0$. We also refer to \cite{DZ19FDMFractionalLaplacian} for discretizations of $(-\Delta)^{s}$ in the two- and three-dimensional cases. 

For any given $f$, we aim to compute the solution $u_f$ of \eqref{eq:Dirichlet-problem} numerically. To this end, we define $v_f := u_f - \phi$, which satisfies 
\begin{equation}
((-\Delta)^{s}+f)v_{f} = F:=-(-\Delta)^{s}\phi \text{ in $\Omega$} ,\quad v_{f}|_{\Omega_{e}}=0. \label{eq:Dirichlet-problem-transformed}  
\end{equation}
Assuming $\supp(\phi) \subset (-3,-2)$, we take $\Omega = (-1,1)$, $\mD=(-3,-1)\cup(1,3)$ and $\ell = 3$. For convenience, we choose $K = 6M$ with $M \in \mN$, and define the grid points 
\begin{equation*}
x_{i}=-3+ih = -3+i\frac{6}{K} = -3+i\frac{1}{M} 
\end{equation*}
for $0\le i \le K=6M$ as above. It is easy to see that
\begin{equation*} 
\begin{aligned} 
& \text{$x_{i}\in\Omega=(-1,1)$ if and only if $2M<i<4M$,} \\ 
& \text{$x_{i}<-2$ if and only if $i<M$,} \\ 
& \text{$x_{i}>2$ if and only if $i>5M$,}
\end{aligned} 
\end{equation*} 
which facilitates a straightforward discretization of \eqref{eq:Dirichlet-problem-transformed} and allows for a simple numerical computation of the forward map \eqref{eq:forward-map}. 
In our simulation, we choose $M=50$, $s=\frac{1}{2}$, $\mO=(-\frac{1}{2},\frac{1}{2})$ and 
\begin{equation*}
\phi(x) = 10000\exp\left(\frac{1}{(x+\frac{5}{2})^{2}-\frac{1}{4}}\right) \chi_{(-3,-2)}(x) \quad \text{for all $x\in\mR$.}
\end{equation*} 
See \Cref{fig:examples} for some examples demonstrating the forward problem. 

\begin{figure}
\centering
\includegraphics[width=0.5\textwidth]{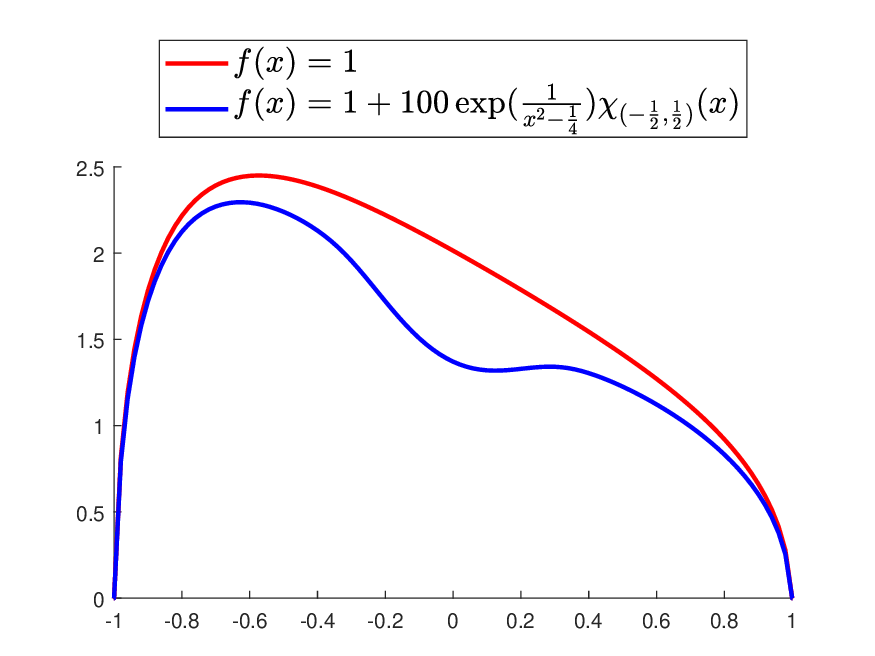}
\caption{Plot of $u_{f}|_{\Omega}$}
\label{fig:examples}
\end{figure}

Suppose we randomly select $N$ points $\{X_{i}\}_{i=1}^{N}$ in $\mD=(-3,-1)\cup(1,3)$. We then consider the log-likelihood, omitting the link function $\Phi$ from \Cref{assu:link},
\begin{equation*}
\tilde{\ell}^{N}(f) = -\frac{1}{2\sigma^{2}} \sum_{i=1}^{N} \left(Y_{i} - G(f)(X_{i})\right)^{2}.
\end{equation*}
We now consider a rather naive algorithm, similar to \cite[Algorithm~2]{FKW25RandomTruncated}, inspired by the preconditioned Crank-Nicolson (pCN) method \cite{cotter2013mcmc}, which itself is a variant of the Metropolis-Hastings (MH) algorithm \cite{metropolis1953equation,hastings1970monte,tierney1998note}.

\begin{algorithm}[H]
\caption{MH-MCMC}         
\label{alg2-naive}                          
\begin{algorithmic}[1]       
\REQUIRE an initial guess $f^{(0)}$ and a resolution parameter $J_{0}\in\mN$. 
\STATE Set ${\rm status}\,(0)={\rm accept}$. 
\STATE Creating an empty sequence $\left\{{\rm status}\,(\tau)\right\}_{\tau=1}^{\infty}$. 
\FOR{$\tau=0,1,2,\cdots$}   
\STATE Generate $f(x) = \sum_{r=-2^{J_{0}}}^{2^{J_{0}}-1} f_{r}\chi_{(0,1)} (2(2^{J_{0}}x - r))$ with $f_{r}$ randomly chosen according to the standard normal distribution. 
\STATE Propose $f^{(\tau+1)} = 1 + \sqrt{1-\beta^{2}}(f^{(\tau)}-1) + \beta f$.
\IF{${\rm status}\,(\tau)={\rm accept}$} 
\STATE $\ell_{\rm current} = \tilde{\ell}^{(N)}(f^{(\tau)})$
\ENDIF  
\IF{$\tilde{\ell}^{(N)}(f^{(\tau+1)}) > \ell_{\rm current}$}
\STATE Set ${\rm status}\,(\tau+1)={\rm accept}$.  \COMMENT{accept the proposal $f^{(\tau+1)}$} 
\ELSE 
\STATE Set $f^{(\tau+1)}=f^{(\tau)}$ and ${\rm status}\,(\tau+1)={\rm reject}$. \COMMENT{reject the proposal $f^{(\tau+1)}$}
\ENDIF 
\ENDFOR 
\RETURN $\{f^{(\tau)} \}_{\tau=1}^{\infty}$ by removing all entries $f^{(\tau)}$ that correspond to ${\rm status}\,(\tau)={\rm reject}$.  
\end{algorithmic}
\end{algorithm}

In our numerical experiment, we draw $N=150$ random samples. The noise level is set to $\sigma=0.001$, the learning rate to $\beta=0.1$, and the resolution parameter to $J_{0}=3$. A total of 5,000,000 iterations were performed, taking approximately 4,440 seconds (1 hour 14 minutes) to complete. The true potential $f$ is approximated by the burn-in sample mean 
\begin{equation*}
f_{\rm burn} = \frac{1}{\lfloor T/2\rfloor} \sum_{\tau=\lfloor T/2\rfloor+1}^{T} f^{(\tau)}. 
\end{equation*}
The reconstruction of $f$ and the evolution of the log-likelihood over the iterations are displayed in \Cref{fig:MCMC-low-precision}. 

\begin{figure}[H]
\centering
\begin{subfigure}[b]{0.49\textwidth}
\centering
\includegraphics[width=\textwidth]{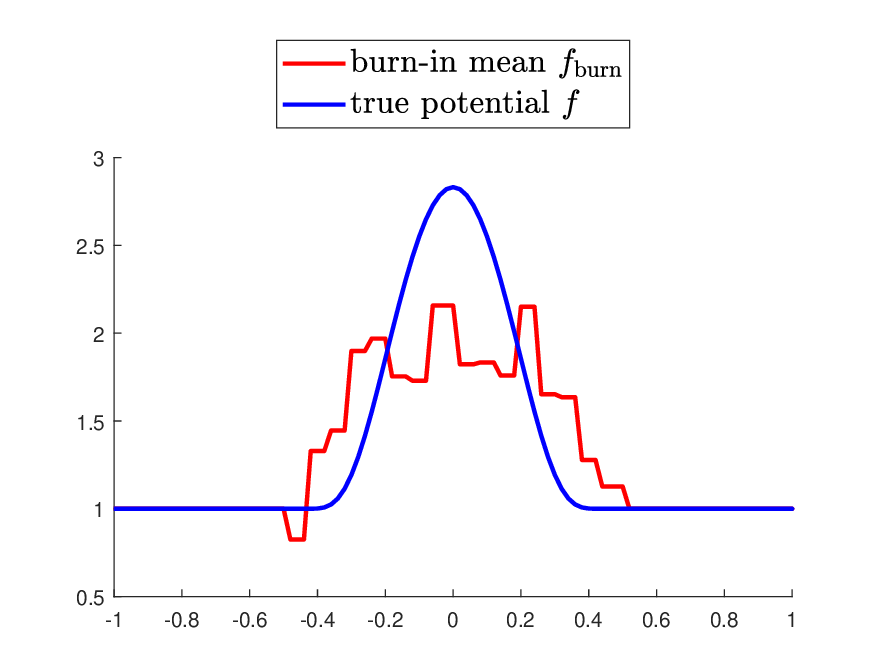}
\caption{Reconstruction of the potential $f$}
\label{fig:reconstruction}
\end{subfigure}
\hfill
\begin{subfigure}[b]{0.49\textwidth}
\centering
\includegraphics[width=\textwidth]{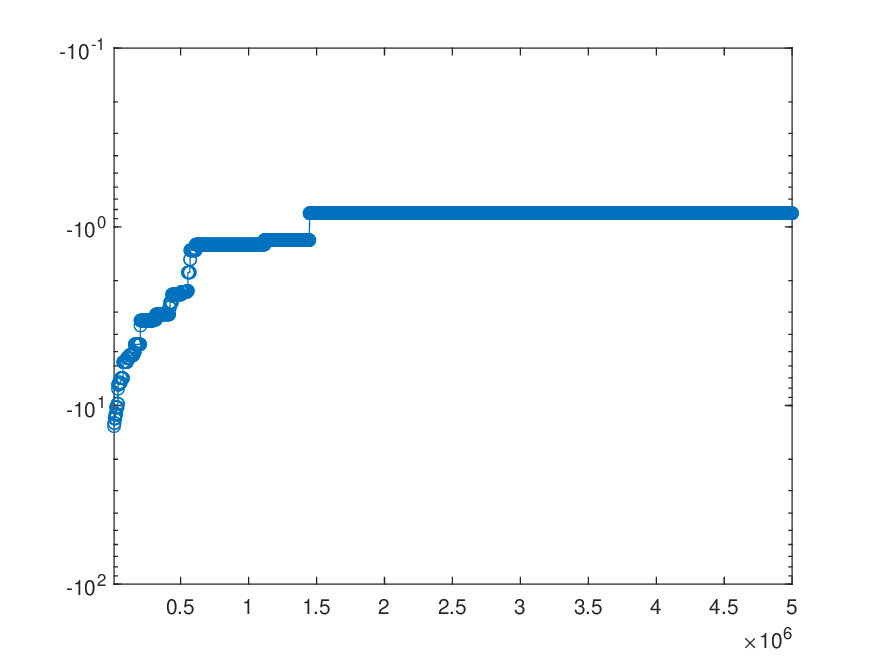}
\caption{log-likelihood $\tilde{\ell}^{N}(f)$} 
\label{fig:likelihood}
\end{subfigure}
\caption{MCMC iterations}
\label{fig:MCMC-low-precision}
\end{figure}

We must admit that the reconstruction in \Cref{fig:reconstruction} is not entirely satisfactory due to the limited precision of double-precision floating-point numbers, see \Cref{fig:likelihood}. One way to address this limitation is to use multiple-precision arithmetic (see, e.g., \cite{FITI2007MultiPrecision}), though this approach significantly slows down the algorithm. Nevertheless, we recommend that readers consider the steepest-descent-type algorithms in \cite{Li24NumericalFractionalCalderon1,Li24NumericalFractionalCalderon2} for improved reconstructions, as the MCMC algorithm presented here is intended primarily to illustrate the Bayesian procedure.

\end{sloppypar}

\subsection*{Matlab code and simulated data} 
\addtocontents{toc}{\SkipTocEntry} 

We have made our \textsc{Matlab} code package and simulated data available in the GitHub repository\footnote{\url{https://github.com/puzhaokow1993/MCMC_fractional_Laplacian}}.

\bibliographystyle{custom}
\bibliography{ref}
\end{document}